\theoremstyle{plain}
\newtheorem*{thm*}{\protect\theoremname}
\theoremstyle{plain}
\newtheorem{thm}{\protect\theoremname}
\theoremstyle{remark}
\newtheorem{claim}[thm]{\protect\claimname}
\date{}
\providecommand{\claimname}{Claim}
\providecommand{\theoremname}{Theorem}
\begin{document}
\title{A grid generalisation of the Kruskal-Katona theorem}
\author{Eero R\"{a}ty\thanks{Centre for Mathematical Sciences, Wilberforce Road, Cambridge CB3 0WB, UK, epjr2@cam.ac.uk}}
\maketitle
\begin{abstract}
For a set $A\subseteq\left[k\right]^{n}=\left\{ 0,\dots,k-1\right\} ^{n}$,
we define the $d$-shadow of $A$ to be the set of points obtained by flipping to zero one of the non-zero 
coordinates of some point in $A$. 
Let $\left[k\right]_{r}^{n}$ be the set of those points 
in $\left[k\right]^{n}$ with exactly $r$ non-zero coordinates. Given the size of $A$, 
how should we choose $A\subseteq\left[k\right]_{r}^{n}$
so as to minimise the $d$-shadow? Note that the case $k=2$ is answered
by the Kruskal-Katona theorem. 

Our aim in this paper is to give an exact answer to this question.
In particular, we show that the sets $\left[t\right]_{r}^{n}$ are
extremal for every $t$. We also give an exact answer to the 'unrestricted'
question when we just have $A\subseteq\left[k\right]^{n}$, showing
for example that the set of points with at least $r$ zeroes is extremal
for every $r$. 
\end{abstract}

\section{Introduction }

Let $\left\{ 0,1\right\} ^{n}$ be the set of all sequences $x=x_{1}\dots x_{n}$
of length $n$ with $x_{i}\in\left\{ 0,1\right\} $ for all $i$.
For $A\subseteq\left\{ 0,1\right\} ^{n}$, define the \textit{lower
shadow} of $A$ to be the set of points obtained from any of its point
by changing one coordinate of any point $x\in A$ from $1$ to $0$,
and denote it by $\partial^{-}A$. Define the \textit{rank} of a point
$x$ to be $w\left(x\right)=\left|\left\{ i:x_{i}=1\right\} \right|$,
and define $\left\{ 0,1\right\} _{r}^{n}=\left\{ x\in\left\{ 0,1\right\} ^{n}:w\left(x\right)=r\right\} $.
Note that the lower shadow operator decreases the rank of a point
by one. 

One can similarly define the \textit{upper shadow }of $A$ 
to be the set of points obtained from any of its point
by changing one coordinate of any point $x\in A$ from $0$ to $1$,
and denote the upper shadow by $\partial^{+}A$. Again,
it is clear that the upper shadow operator increases the rank of a
point by one. 

For a given $r$, it is natural to ask how should one choose $A\subseteq\left\{ 0,1\right\} _{r}^{n}$
of a given size in order to minimise the size of the lower shadow.
This question was answered by Kruskal \cite{key-51} and Katona \cite{key-6}.
Define the \textit{colexicographic order} $\leq_{c}$ on $\left\{ 0,1\right\} _{r}^{n}$
as follows. For points $x,y\in\left\{ 0,1\right\} _{r}^{n}$, let
$X=\left\{ i:x_{i}=1\right\} $ and $Y=\left\{ i:y_{i}=1\right\} $,
and  we set $x\leq_{c}y$ if $X=Y$ or $\max\left(X\Delta Y\right)\in Y$.
The Kruskal-Katona theorem states that for a set $A\subseteq\left\{ 0,1\right\} _{r}^{n}$
of a given size, the lower shadow of $A$ is minimised when $A$ is
chosen to be an initial segment of colexicographic order.

Define the \textit{lexicographic order} $\leq_{l}$ on $\left\{ 0,1\right\} _{r}^{n}$
by setting $x\leq_{l}y$ if $X=Y$ or $\min\left(X\Delta Y\right)\in X$,
where $X$ and $Y$ are defined as above. For $x\in\left\{ 0,1\right\} ^{n}$,
define $x^{c}\in\left\{ 0,1\right\} ^{n}$ to be the point obtained
by taking $\left(x^{c}\right)_{i}=1-x_{i}$ for all $i$. For a given
$A\subseteq\left\{ 0,1\right\} _{r}^{n}$, define $\overline{A}$
by setting $\overline{A}=\left\{ x^{c}:x\in A\right\} $. Note that
we have $\left|\overline{A}\right|=\left|A\right|$ and $\overline{A}\subseteq\left\{ 0,1\right\} _{n-r}^{n}$.
It is also easy to verify that $\partial^{+}A=\overline{\partial^{-}\overline{A}}$,
and hence it follows that $\left|\partial^{+}A\right|=\left|\partial^{-}\overline{A}\right|$.
Thus the question on minimising the size of the upper shadow in $\left\{ 0,1\right\} _{r}^{n}$
can be transformed into a question on minimising the size of the lower
shadow in $\left\{ 0,1\right\} _{n-r}^{n}$. Hence the Kruskal-Katona
theorem implies that the initial segments of the lexicographic order
minimise the upper shadow. 

There are many natural generalisations of the lower and upper shadow
for points in $\left[k\right]^{n}$, and one such generalisation can
be obtained in the following way. Let $\left[k\right]=\left\{ 0,\dots,k-1\right\} $
and $\left[k\right]^{n}=\left\{ 0,\dots,k-1\right\} ^{n}$. Define
the rank of a point $x\in\left[k\right]^{n}$ to be $w\left(x\right)=\left|\left\{ i:x_{i}\geq1\right\} \right|$.
For $0\leq r\leq n$ set $\left[k\right]_{r}^{n}=\left\{ x\in\left[k\right]^{n}:w\left(x\right)=r\right\} $. 

Define the \textit{$d$-shadow} of a point $x\in\left[k\right]^{n}$
to be the set of those points obtained from $x$ by flipping one of
the coordinates of $x$ which is in $\left\{ 1,\dots,k-1\right\} $
to $0$. We denote the $d$ shadow of a point $x$ by $d\left(\left\{ x\right\} \right)$.
For $A\subseteq\left[k\right]^{n}$ we define the $d$-shadow of $A$
by setting $d\left(A\right)=\bigcup_{x\in A}d\left(\left\{ x\right\} \right)$.
For example, $d\left(\left\{ 012\right\} \right)=\left\{ 002,010\right\} $
and $d\left(\left\{ 000\right\} \right)=\emptyset$. Note that the
rank of a point in $d\left(\left\{ x\right\} \right)$ is one lower
than the rank of $x$. It is clear that $d$ agrees with $\partial^{-}$
when $k=2$, so this operator indeed generalises the lower shadow. 

There is a superficial resemblance to a result of Clements, as we
now describe. Define the \textit{$d^{+}$-shadow }of a point $x\in\left[k\right]^{n}$
by setting $d^{+}\left(\left\{ x\right\} \right)$ to be the set of
those points obtained from $x$ by changing one of the coordinates
of $x$ which equals $0$ to any number in $\left\{ 1,\dots,k-1\right\} $,
and we set $d\left(A\right)=\bigcup_{x\in A}d\left(\left\{ x\right\} \right)$.
Again it is clear that the rank of a point  in $d^{+}\left(\left\{ x\right\} \right)$
is one larger than the rank of $x$. 

Clements \cite{key-7} found an order in $\left[k\right]_{r}^{n}$
whose initial segments have minimal $d^{+}$-shadow. Recall that the
ordinary lower and upper shadow can be related to each other by using
the fact that $\partial^{+}A=\overline{\partial^{-}\overline{A}}$.
However, for $k\geq3$, it is clear that there is no similar natural
relation between $d^{+}$- and $d$-shadows. That is, given the Clements'
result for $d^{+}$-shadow, there seems to be no way to deduce results
related to $d$-shadow. 

There is also a superficial resemblance to the Clements-Lindstr\"{o}m
Theorem \cite{key-1}. Let $k_{1},\dots,k_{n}$ be integers such that
$1\leq k_{1}\leq\dots\leq k_{n}$, and let $F$ be the set of all
integer sequences $\left(a_{1},\dots,a_{n}\right)$ with $0\leq a_{i}\leq k_{i}$
for all $i$. Define the shadow operator $\Gamma$ by setting 
\[
\Gamma\left(\left(a_{1},\dots a_{n}\right)\right)=\left\{ \left(a_{1}-1,a_{2}\dots,a_{n}\right),\left(a_{1},a_{2}-1,\dots,a_{n}\right)\dots,\left(a_{1},a_{2}\dots,a_{n}-1\right)\right\} \cap F,
\]
and $\Gamma\left(A\right)=\bigcup_{a\in A}\Gamma\left(a\right)$ for
$A\subseteq F$. Let $F_{r}$ be the set of those sequences $\left(a_{1},\dots,a_{n}\right)\in F$
with $\sum_{i=1}^{n}a_{i}=r$. Generalise the lexicographic order
by writing $\left(a_{1},\dots,a_{n}\right)<_{l}\left(b_{1},\dots,b_{n}\right)$
if there exists $i$ such that $a_{j}=b_{j}$ for all $j<i$ and $a_{i}<b_{i}$.
The Clements-Lindstr\"{o}m theorem states the initial segments of the
lexicographic order minimise the size of the $\Gamma$-shadow on $F_{r}$. 

The aim of this paper  is to find an order on $\left[k\right]_{r}^{n}$
whose initial segments have minimal $d$-shadow. In fact, we do this
by first solving the unrestricted version, i.e.\ we find an order on
$\left[k\right]^{n}$ whose initial segments have minimal $d$-shadow.
Once we have proved the result for $\left[k\right]^{n}$, the result
for $\left[k\right]_{r}^{n}$ follows easily. 

We start by defining the order, whose initial segments minimise the
$d$-shadow for the unrestricted version. For each $i$ define $R_{i}\left(x\right)=\left\{ j:x_{j}=i\right\} $.
For fixed $k$, define an order $\leq$ on $\left[k\right]^{n}$ by
setting $x\leq y$ if $x=y$ or one of the following conditions holds
\begin{enumerate}
\item $\left|R_{0}\left(x\right)\right|>\left|R_{0}\left(y\right)\right|$ 
\item $\left|R_{0}\left(x\right)\right|=\left|R_{0}\left(y\right)\right|$
and for the largest $i$ satisfying $R_{i}\left(x\right)\neq R_{i}\left(y\right)$
we have $\max\left(R_{i}\left(x\right)\Delta R_{i}\left(y\right)\right)\in R_{i}\left(y\right)$. 
\end{enumerate}
As usual, we say that $x<y$ if $x\leq y$ and $x\neq y$. Now we
are ready to state the unrestricted version of our theorem. 
\begin{thm}
Let $A$ be a subset of $\left[k\right]^{n}$, and let $B$ be an
initial segment of the $\leq$-order in $\left[k\right]^{n}$ with
$\left|B\right|=\left|A\right|$. Then $\left|d\left(A\right)\right|\geq\left|d\left(B\right)\right|$. 
\end{thm}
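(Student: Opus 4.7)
The plan is by induction on $n$, with $k$ held fixed. The base case $n=1$ reduces to the chain $[k]^1 = \{0, 1, \ldots, k-1\}$: any nonempty $A$ has $d(A) \subseteq \{0\}$, so the initial segments $\{0, 1, \ldots, s-1\}$ trivially minimise the shadow.

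For the inductive step, I slice $A$ by the last coordinate, writing $A_v = \{x' \in [k]^{n-1} : (x', v) \in A\}$. Examining $d(\{(x', v)\})$ case by case (for $v \geq 1$ it is $d(\{x'\}) \times \{v\} \cup \{(x', 0)\}$, for $v = 0$ it is $d(\{x'\}) \times \{0\}$) yields the decomposition
\[
|d(A)| = \bigl|d(A_0) \cup A_1 \cup \cdots \cup A_{k-1}\bigr| + \sum_{v=1}^{k-1} |d(A_v)|.
\]
The restriction of the order $\leq$ to each slice $\{(x', v) : x' \in [k]^{n-1}\}$ coincides with $\leq$ on $[k]^{n-1}$, so the inductive hypothesis, applied to each slice, lets me replace $A_v$ by the initial segment $B_v \subseteq [k]^{n-1}$ of $\leq$ with $|B_v| = |A_v|$, weakly decreasing each $|d(A_v)|$ and in particular $|d(A_0)|$.

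Because $\leq$ is a total order, its initial segments are nested by size, so $B_1 \cup \cdots \cup B_{k-1}$ collapses to the largest slice $B_{v^*}$. I would prove, as a companion lemma in the same induction, that $d$ sends initial segments of $\leq$ on $[k]^{n-1}$ to initial segments of $\leq$ on $[k]^{n-1}$; granted this, $d(B_0)$ and $B_{v^*}$ are also nested, giving $|d(B_0) \cup B_{v^*}| = \max(|d(B_0)|, |B_{v^*}|)$. Consequently, if $A^\flat \subseteq [k]^n$ has slices $(B_v)$, then $|d(A^\flat)| \leq |d(A)|$ and the size $|d(A^\flat)|$ depends only on the slice-size vector $\vec a = (|A_0|, \ldots, |A_{k-1}|)$.

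The last step is a rebalancing across slices: I must show that among all vectors $\vec a$ with $\sum_v a_v = |A|$, the one arising from the target initial segment $B \subseteq [k]^n$ of size $|A|$ minimises
\[
\max\bigl(g(a_0),\, \max_{v \geq 1} a_v\bigr) + \sum_{v = 1}^{k-1} g(a_v),
\]
where $g(s)$ denotes the shadow size of the initial segment of $\leq$ on $[k]^{n-1}$ of size $s$. The natural tool is a single-element exchange: shifting a point from one slice to another drives $\vec a$ toward the target vector $\vec a^{\,B}$ without increasing the expression. The main obstacle is exactly this rebalancing and the concurrent verification of the companion shadow-of-initial-segment lemma: both require a careful unpacking of the recursive form of the order $\leq$, specifically a precise description of how the slices of $B$ are distributed by size and a suitable monotonicity relation between consecutive values of $g$ that rules out every harmful exchange.
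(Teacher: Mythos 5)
Your slicing by the last coordinate is exactly the paper's $C_n$-compression (the map $A \mapsto C_n(A)$ of Claim 1 of the paper), and your decomposition of $|d(A)|$ matches the paper's equations (2) and (4). Up to this point you are on the same track. But then you diverge, and the divergence is where the gap opens. The paper does not compress in one direction and then try to rebalance slice sizes; it compresses in \emph{all} $n$ coordinate directions, iterates to a fixed point (Claim 2), and then spends roughly two-thirds of the argument (Claims 3 through 14) establishing structural properties of ``compressed'' sets --- they are down-sets, they are sandwiched between $B_{\geq r+1}$ and $B_{\geq r}$, their remaining freedom lives inside a single component $C_T$, and so on --- and finally closes the argument by a detailed case analysis on the shape of $T$. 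You propose to replace all of this with a single ``rebalancing across slices'' lemma: that the slice-size vector of the global initial segment minimises
\[
\max\bigl(g(a_0),\, a_1, \dots, a_{k-1}\bigr) + \sum_{v=1}^{k-1} g(a_v)
\]
over all vectors summing to $|A|$, where $g$ is the $(n-1)$-dimensional shadow function. This lemma is equivalent to Theorem 1 restricted to sets with all $x_n$-slices initial segments --- a class strictly larger than the class of compressed sets --- so it is not a softer statement than what the paper proves, and you have not proved it. You flag single-element exchange as ``the natural tool,'' but for this to work you would need convexity-type control of $g$ together with an explicit description of the slice-size vector of a $\leq$-initial segment, and you supply neither; the length of the paper's case analysis is strong evidence that no clean exchange argument exists.

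A smaller but real issue: the ``companion lemma'' that $d$ carries initial segments to initial segments is load-bearing for both your $\max$-identity and the paper's equation (6). The paper states it without proof; you explicitly name it as an obstacle but likewise leave it unproved. If you want to pursue your route, you should settle this first --- it is plausible but not immediate from the definition of $\leq$ (the case where $|R_0(y)| = |R_0(x)|$ requires a genuine argument about which zero to fill in). In short: your setup is correct and your honesty about where the difficulty lies is creditable, but the rebalancing lemma is the theorem, not a step toward it, and the proposal as written has no argument for it.
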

The proof of Theorem 1 is an inductive proof. As the proof of Theorem
1 is rather long, we split it into four subsections. In the first
subsection, we introduce certain codimension-1 compression operators,
and we prove that they cannot increase the size of the shadow. In
particular, because of the compression operators, it suffices to prove
Theorem 1 for those sets $A$ that are stable under the compression
operators. 

Compression operators have been much utilised in other papers as well,
see e.g.\ \cite{key-40,key-2}. For example, there are very straightforward
proofs using compression operators for Harper's vertex-isoperimetric
inequality on the hypercube and for the edge-isoperimetric inequality
on the hypercube. In these examples it is very straightforward to
prove the desired isoperimetric inequality for the sets that are stable
under the compression operators. However, proving the inequality for
the sets that are stable under the compression operators is highly
nontrivial in our main theorem. In particular, the main part of the
proof consists of dealing with the sets that are stable under the
compression operators and proving that Theorem 1 holds for such sets. 

Even though $n=1$ is the base case in the proof, it turns out to
be convenient to consider the case $n=2$ individually as well. Hence
we consider the case $n=2$ in the second subsection. This special
case turns out to be reasonably straightforward when restricted to
the sets that are stable under the compression operators. 

We start the proof of the general case $n\geq3$ by making some observations
on the structure of the sets that are stable under the compression
operators. As an example, we prove that it suffices to restrict our
attention to the sets $A$ for which there exists $r$ such that $B_{\geq r}\subseteq A\subseteq B_{\geq r+1}$
and which are also stable under the compression operators. Hence we
may write $A$ as $A=B_{\geq r}\cup D$ for some $D\subseteq B_{r+1}$,
and the fact that $A$ is stable under the compression operators also
restricts the structure of $D$. 

Let $A$ be a set of the form $A=B_{\geq r}\cup D$ for some $D\subseteq B_{r+1}$.
Note that we have $\left|d\left(A\right)\right|=\left|B_{\geq r+1}\right|+\left|d\left(D\right)\right|$,
and hence we can focus on analysing $d\left(D\right)$. The fourth
subsection is dedicated to analysing $d\left(D\right)$ by using previous
and some new structural observations. At this stage of the proof,
we need to split into smaller subcases depending on the size and the
structure of $A$. 

Denote the restriction of the $\leq$-order on $\mathbb{N}_{r}^{n}=\left\{ x=x_{1}\dots x_{n}:w\left(x\right)=r\right\} $
also by $\leq$, where we define $\mathbb{N}=\left\{ 0,1,\dots\right\} $.
This is well-defined, as it is easy to check that $\left[m\right]_{r}^{n}$
is an initial segment of the restriction of $\leq$ on $\left[k\right]_{r}^{n}$
for all $k\ge m$, and the orders coincide on $\left[m\right]_{r}^{n}$.
Now we can state our main theorem.
\begin{thm}
Let $A$ be a subset of $\left[k\right]_{r}^{n}$, and let $B$ be
an initial segment of the $\leq$-order in $\left[k\right]_{r}^{n}$
with $\left|B\right|=\left|A\right|$. Then $\left|d\left(A\right)\right|\geq\left|d\left(B\right)\right|$.
\end{thm}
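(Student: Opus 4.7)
The plan is to reduce Theorem 2 to Theorem 1 by augmenting both $A$ and $B$ with every point of $[k]^{n}$ of rank strictly less than $r$, applying Theorem 1 to the enlarged sets, and cancelling the contribution that is common to both. Concretely, set $C=\{x\in[k]^{n}:w(x)<r\}$ and define $A^{\ast}=A\cup C$, $B^{\ast}=B\cup C$ as subsets of $[k]^{n}$.

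The first thing to verify is that $B^{\ast}$ is an initial segment of the $\leq$-order on $[k]^{n}$. This is immediate from the definition of $\leq$: condition~(1) orders points primarily by the size of $R_{0}$, so every point of rank $<r$ precedes every point of rank $r$; within a fixed rank, condition~(2) takes over; and the excerpt already notes that the restriction of $\leq$ to $[k]_{r}^{n}$ coincides with the order used in Theorem 2. Thus $B^{\ast}=C\cup B$ is exactly the initial segment of $[k]^{n}$ of size $|C|+|A|$. The second ingredient is that $d$ lowers the rank of a point by exactly one, so $d(A)\subseteq[k]_{r-1}^{n}$ while $d(C)\subseteq\bigcup_{s\leq r-2}[k]_{s}^{n}$. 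Consequently $d(A)$ and $d(C)$ are disjoint, as are $d(B)$ and $d(C)$, and so
\[
|d(A^{\ast})|=|d(A)|+|d(C)|,\qquad |d(B^{\ast})|=|d(B)|+|d(C)|.
\]

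Since $|A^{\ast}|=|B^{\ast}|$, Theorem 1 gives $|d(A^{\ast})|\geq|d(B^{\ast})|$, and subtracting the common term $|d(C)|$ yields $|d(A)|\geq|d(B)|$, as required. I do not anticipate any real obstacle at this stage: once Theorem 1 is in hand the deduction is pure book-keeping, and the only step meriting any care is the verification that rule~(1) of $\leq$ really does place every rank-$<r$ point before every rank-$r$ point. The degenerate cases (such as $r=0$, where $d(A)$ is empty, or $r=n$) cause no additional difficulty.
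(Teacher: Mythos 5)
Your proof is correct and is essentially the paper's proof: augment $A$ and the initial segment $B$ with all points of $\left[k\right]^{n}$ of rank less than $r$, apply Theorem~1 to the enlarged sets (noting the augmented $B$ is an initial segment of $\left[k\right]^{n}$ and that the added shadow contribution is disjoint from $d(A)$ and $d(B)$), and cancel the common term. The paper states and proves a slightly more general version for finite subsets of $\mathbb{N}_{r}^{n}$, which requires one extra preliminary step to reduce to a bounded grid, but the core argument is identical to yours.
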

We end this section by introducing some notation that we use throughout
the paper. We write $\left[n\right]^{\left(r\right)}=\left\{ A\subseteq\left[n\right]:\left|A\right|=r\right\} $
and $\left[n\right]^{\left(\leq r\right)}=\left\{ A\subseteq\left[n\right]:\left|A\right|\leq r\right\} $.
For convenience, we often write $B_{r}=\left[k\right]_{n-r}^{n}=\left\{ x\in\left[k\right]^{n}:\left|R_{0}\left(x\right)\right|=r\right\}$
and $B_{\geq r}=\bigcup_{i=r}^{n}B_{i}$ for the set of sequences
with exactly $r$ zeroes and at least $r$ zeroes respectively. Note
that $B_{r}$ depends on the ground set $\left[k\right]$, but since
the value of $k$ is often clear the dependence is not highlighted.
For $x\in\left[k\right]^{n}$, we write $d\left(x\right)$ instead
of $d\left(\left\{ x\right\} \right)$.

For $a_{i}\in\left[k\right]$ and for positive integers $t_{i}\in\mathbb{N}$
we define $\left(t_{1}\cdot a_{1}\right)\left(t_{2}\cdot a_{2}\right)\dots\left(t_{r}\cdot a_{r}\right)$
to represent the point $a_{1}\dots a_{1}a_{2}\dots a_{2}\dots a_{r}\dots a_{r}$,
which has $t_{1}$ $a_{1}$'s immediately followed by $t_{2}$ $a_{2}$'s,
and so on. For example, we have $\left(3\cdot0\right)\left(2\cdot4\right)56=0004456$.

For $y=y_{1}\dots y_{n-1}\in\left[k\right]^{n-1}$, $s\in\left[n\right]$
and $t\in\left[k\right]$ define 
\[
t_{s}y=y_{1}\dots y_{s-1}ty_{s}\dots y_{n-1},
\]
and for $Y\subseteq\left[k\right]^{n-1}$ we write $t_{s}Y=\left\{ t_{s}y\,:\,y\in Y\right\} $.
Note that we have $t_{s}x\leq t_{s}y$ if and only if $x\leq y$. 

Finally define the \textit{binary order }$\leq_{bin}$ on ${\cal P}\left(\mathbb{N}\right)$
by setting $X\leq_{b}Y$ if $X=Y$ or $\max\left(X\Delta Y\right)\in Y$.
We write $X<_{bin}Y$ if $X\leq_{bin}Y$ and $X\neq Y$. Note that
the second condition in the definition of the $\leq$-order is equivalent
to saying that for the largest $i$ for which $R_{i}\left(x\right)\neq R_{i}\left(y\right)$
we have $R_{i}\left(x\right)<_{bin}R_{i}\left(y\right)$. 

\section{Proof of Theorem 1}

For convenience, we say that a set $A\subseteq\left[k\right]^{n}$
is \textit{extremal }if the size of $d\left(A\right)$ is minimal
among all subsets of $\left[k\right]^{n}$ of the same size. The proof
of Theorem 1 is an inductive proof. Note that the case $n=1$ is trivial.
Throughout the Section 2 we assume that Theorem 1 holds for $\left[k\right]^{n-1}$,
and our aim is to prove it for $\left[k\right]^{n}$. 

\subsection{The compression operators}

For $A\subseteq\left[k\right]^{n}$, $t\in\left[k\right]$ and $s\in\left[n\right]$
define 
\[
A_{s,t}=\left\{ y\in\left[k\right]^{n-1}:t_{s}y\in A\right\} .
\]
Let $B_{s,t}\subseteq\left[k\right]^{n-1}$ be an initial segment
of the $\leq$-order with $\left|B_{s,t}\right|=\left|A_{s,t}\right|$,
and set $C_{s,t}=t_{s}B_{s,t}$. Define the \textit{$C_{s}$-compression
}of $A$ by setting $C_{s}\left(A\right)=\bigcup_{t=0}^{k-1}C_{s,t}$.
We start by proving that $C_{s}$-compressions cannot increase the
size of the $d$-shadow.
\addtocounter{thm}{-2}
\begin{claim}
For all $A\subseteq\left[k\right]^{n}$ and $s\in\left[n\right]$
we have $\left|C_{s}\left(A\right)\right|=\left|A\right|$ and $\left|d\left(A\right)\right|\geq\left|d\left(C_{s}\left(A\right)\right)\right|$.
\end{claim}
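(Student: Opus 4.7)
The plan is to analyse both $d(A)$ and $d(C_s(A))$ slice by slice along the $s$-th coordinate. For $t \in [k]$ the slice $d(A)_{s,t} = \{y \in [k]^{n-1} : t_s y \in d(A)\}$ satisfies $|d(A)| = \sum_t |d(A)_{s,t}|$, and analogously for $C_s(A)$, so it suffices to compare the two decompositions slice by slice. The size preservation $|C_s(A)| = |A|$ is immediate, since the $C_{s,t} = t_s B_{s,t}$ are disjoint with $|C_{s,t}| = |B_{s,t}| = |A_{s,t}|$.

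A case analysis on whether the coordinate that $d$ flips is equal to $s$ or not yields
\[
d(A)_{s,t} = d(A_{s,t}) \text{ for } t \geq 1, \qquad d(A)_{s,0} = d(A_{s,0}) \cup \bigcup_{t' \geq 1} A_{s,t'},
\]
with the analogous equalities for $C_s(A)$ obtained by replacing each $A_{s,t}$ by $B_{s,t}$. The induction hypothesis (Theorem 1 in $[k]^{n-1}$) gives $|d(A_{s,t})| \geq |d(B_{s,t})|$ for every $t$, since $B_{s,t}$ is the $\leq$-initial segment of size $|A_{s,t}|$; this settles the slices $t \geq 1$ directly.

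For $t = 0$ I would rely on two structural facts about $\leq$. First, initial segments of a total order form a chain under inclusion, so the $B_{s,t'}$ for $t' \geq 1$ are nested and $\bigcup_{t' \geq 1} B_{s,t'} = B_{s,t^*}$, where $t^*$ maximises $|A_{s,t'}|$ over $t' \geq 1$. Second, as an auxiliary lemma, $d$ sends a $\leq$-initial segment of $[k]^{n-1}$ to another $\leq$-initial segment. Granted both, $d(B_{s,0})$ and $B_{s,t^*}$ are nested, so $|d(B_{s,0}) \cup B_{s,t^*}| = \max(|d(B_{s,0})|, |B_{s,t^*}|)$, and then
\[
|d(A)_{s,0}| \geq \max(|d(A_{s,0})|, |A_{s,t^*}|) \geq \max(|d(B_{s,0})|, |B_{s,t^*}|) = |d(C_s(A))_{s,0}|,
\]
using the induction hypothesis together with $|A_{s,t^*}| = |B_{s,t^*}|$. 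Summing over $t$ then finishes the proof.

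The main obstacle is the auxiliary lemma that $d$ preserves $\leq$-initial segments in $[k]^{n-1}$. Because $\leq$ is layered by rank (first by $|R_0|$, then by the binary-order condition within each rank), a $\leq$-initial segment is everything of rank below some threshold $r^*$ together with a $\leq$-prefix of the rank-$r^*$ layer, and the shadow inherits the same layered structure precisely when, on each rank, the $d$-image of a $\leq$-prefix is again a $\leq$-prefix. I would verify this single-rank statement by a direct combinatorial argument on the $R_i$-profiles and the binary order $\leq_{bin}$, exploiting that the definition of $\leq$ was engineered so that zeroing a largest-labelled non-zero coordinate corresponds cleanly to a descent in $\leq_{bin}$.
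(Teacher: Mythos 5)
Your proposal is correct and follows the same route as the paper: decompose along the $s$-th slice, settle the slices $t\geq1$ by the inductive hypothesis on $[k]^{n-1}$, and handle the slice $t=0$ using that initial segments of $\leq$ are nested and that $d$ maps an initial segment to an initial segment, so the relevant union has size equal to a maximum that can be bounded term by term. The paper also treats the claim that $d$ preserves initial segments as a remark rather than proving it, so you and the paper leave the same stone unturned.
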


\begin{proof}
For a given $s$, note that the sets $\left(C_{s,t}:t\in\left[k\right]\right)$ are pairwise disjoint, 
as every $x\in C_{s,t}$ satisfies $x_{s}=t$.
Since $\left|C_{s,t}\right|=\left|A_{s,t}\right|$ for all $t\in\left[k\right]$,
it follows that $\left|C_{s}\left(A\right)\right|=\left|A\right|$. 

Note that we have

\begin{equation}
d\left(C_{s}\left(A\right)\right)=d\left(\bigcup_{t=0}^{k-1}t_{s}B_{s,t}\right)=\left(\left(\bigcup_{t=1}^{k-1}0_{s}B_{s,t}\right)\cup0_{s}d\left(B_{s,0}\right)\right)\cup\left(\bigcup_{t=1}^{k-1}t_{s}d\left(B_{s,t}\right)\right),\label{eq:1}
\end{equation}
and similarly we have 
\begin{equation}
d\left(A\right)=d\left(\bigcup_{t=0}^{k-1}t_{s}A_{s,t}\right)=\left(\left(\bigcup_{t=1}^{k-1}0_{s}A_{s,t}\right)\cup0_{s}d\left(A_{s,0}\right)\right)\cup\left(\bigcup_{t=1}^{k-1}t_{s}d\left(A_{s,t}\right)\right).\label{eq:2}
\end{equation}

Observe that the $k$ sets 
\[
\left(\bigcup_{t=1}^{k-1}0_{s}B_{s,t}\right)\cup0_{s}d\left(B_{s,0}\right),\,\,1_{s}d\left(B_{s,1}\right),\dots,\left(k-1\right)_{s}d\left(B_{s,k-1}\right)
\]
are pairwise disjoint sets as their $s^{th}$ coordinates disagree
pairwise. Hence we have 
\begin{equation}
\left|d\left(\bigcup_{t=0}^{k-1}t_{s}B_{s,t}\right)\right|=\left|\left(\bigcup_{t=1}^{k-1}0_{s}B_{s,t}\right)\cup0_{s}d\left(B_{s,0}\right)\right|+\sum_{t=1}^{k-1}\left|t_{s}d\left(B_{s,t}\right)\right|,\label{eq:3}
\end{equation}
and similarly we have 
\begin{equation}
\left|d\left(\bigcup_{t=0}^{k-1}t_{s}A_{s,t}\right)\right|=\left|\left(\bigcup_{t=1}^{k-1}0_{s}A_{s,t}\right)\cup0_{s}d\left(A_{s,0}\right)\right|+\sum_{t=1}^{k-1}\left|t_{s}d\left(A_{s,t}\right)\right|.\label{eq:4}
\end{equation}

Since Theorem 1 holds for $\left[k\right]^{n-1}$, it follows that
\begin{equation}
\left|t_{s}d\left(A_{s,t}\right)\right|=\left|d\left(A_{s,t}\right)\right|\geq\left|d\left(B_{s,t}\right)\right|=\left|t_{s}d\left(B_{s,t}\right)\right|.\label{eq:5}
\end{equation}
Note that the $d$-shadow of an initial segment  is also an initial
segment, and initial segments are nested. Hence we have 
\begin{equation}
\left|\left(\bigcup_{t=1}^{k-1}0_{s}B_{s,t}\right)\cup0_{s}d\left(B_{s,0}\right)\right|=\max\left(\left|B_{s,1}\right|,\dots,\left|B_{s,k-1}\right|,\left|d\left(B_{s,0}\right)\right|\right).\label{eq:6}
\end{equation}
Combining the trivial estimate

\begin{equation}
\left|\left(\bigcup_{t=1}^{k-1}0_{s}A_{s,t}\right)\cup0_{s}d\left(A_{s,0}\right)\right|\geq\max\left(\left|A_{s,1}\right|,\dots,\left|A_{s,k-1}\right|,\left|d\left(A_{s,0}\right)\right|\right)\label{eq:7}
\end{equation}
with (\ref{eq:5}), (\ref{eq:6}) and the fact that $\left|A_{s,i}\right|=\left|B_{s,i}\right|$
for all $0\leq i\leq k-1$, it follows that 
\begin{equation}
\left|\bigcup_{t=1}^{k-1}0_{s}A_{s,t}\cup0_{s}d\left(A_{s,0}\right)\right|\geq\left|\bigcup_{t=1}^{k-1}0_{s}B_{s,t}\cup0_{s}d\left(B_{s,0}\right)\right|.\label{eq:8}
\end{equation}
Thus pairing up the terms in (\ref{eq:3}) and (\ref{eq:4}) in the
natural way and applying (\ref{eq:5}) and (\ref{eq:8}) gives that
\begin{equation}
\left|d\left(A\right)\right|\geq\left|d\left(C_{s}\left(A\right)\right)\right|,\label{eq:9}
\end{equation}
which completes the proof. 
\end{proof}
We say that $T\subseteq\left[k\right]^{n}$ is \textit{compressed}
if $C_{s}\left(T\right)=T$ holds for all $s\in\left\{ 1,\dots,n\right\} $.
We now make the standard observation that it suffices to prove Theorem
1 for compressed sets. 
\begin{claim}
Let $A$ be a subset of $\left[k\right]^{n}$. Then there exists a
compressed set $B\subseteq\left[k\right]^{n}$ with $\left|B\right|=\left|A\right|$
and $\left|d\left(A\right)\right|\geq\left|d\left(B\right)\right|$.
\end{claim}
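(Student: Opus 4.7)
The plan is a standard compression/monovariant argument. I will iteratively apply the codimension-$1$ compressions $C_1,\dots,C_n$ to $A$, and show that this process must terminate in a compressed set. By Claim 1, each step preserves cardinality and cannot increase the size of the $d$-shadow, so the final set $B$ automatically satisfies $|B|=|A|$ and $|d(A)|\ge|d(B)|$.

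To guarantee termination, I will introduce a monovariant. Let $\mathrm{pos}(x)\in\{1,\dots,k^{n}\}$ denote the position of $x$ in the $\le$-order on $[k]^{n}$, and define
\[
\Phi(A)=\sum_{x\in A}\mathrm{pos}(x).
\]
Since $\Phi$ is a non-negative integer bounded above by $k^{2n}$, it suffices to show that whenever $C_{s}(A)\neq A$, we have $\Phi(C_{s}(A))<\Phi(A)$; then the sequence of sets obtained by repeatedly applying whichever $C_{s}$ is still non-trivial must stabilise, and the limiting set is compressed by definition.

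The strict-decrease claim splits naturally over the slices indexed by the $s^{\text{th}}$ coordinate. Fix $s$ and $t\in[k]$. The compression replaces $A_{s,t}$ by the initial segment $B_{s,t}$ of the $\le$-order on $[k]^{n-1}$ of the same cardinality. The key observation, already recorded in the excerpt, is that $t_{s}x\le t_{s}y$ in $[k]^{n}$ iff $x\le y$ in $[k]^{n-1}$; consequently the ranking of the elements of the $t$-slice $\{t_{s}y:y\in[k]^{n-1}\}$ inside $[k]^{n}$ agrees, up to an order-preserving reindexing, with the $\le$-order on $[k]^{n-1}$. Replacing any finite subset $X$ of a totally ordered set by the initial segment of size $|X|$ never increases the sum of positions of its elements, and strictly decreases it unless $X$ is already an initial segment. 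Applying this inside each slice and summing over $t$ gives $\Phi(C_{s}(A))\le\Phi(A)$, with strict inequality as soon as some $A_{s,t}$ fails to be an initial segment, i.e.\ whenever $C_{s}(A)\neq A$.

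The only routine point I expect to verify carefully is the order-preserving identification of the $t$-slice inside $[k]^{n}$ with the $\le$-order on $[k]^{n-1}$, but this is immediate from the equivalence $t_{s}x\le t_{s}y\iff x\le y$ stated in the introduction. Once the monovariant is in place, iterating and invoking Claim 1 at every step produces the required compressed set $B$.
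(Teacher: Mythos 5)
Your proposal is correct and is essentially the paper's own argument: the paper's monovariant $f(A)=\sum_{i=1}^{k^{n}}i\,\mathbb{I}\{K_{i}\in A\}$ is exactly your $\Phi(A)=\sum_{x\in A}\mathrm{pos}(x)$, and the paper likewise iterates compressions until $f$ (a bounded non-negative integer) stabilises, then invokes Claim 1. You merely spell out the strict-decrease step (order-preserving identification of each slice with $[k]^{n-1}$, plus the fact that replacing a subset by the initial segment of the same size cannot increase, and generically decreases, the sum of positions), which the paper leaves to the reader.
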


\begin{proof}
Consider a sequence $\left(A_{m}\right)$ with $A_{0}=A$ obtained
as follows. Given $A_{m}$, if there exists $s\in\left[n\right]$
for which $C_{s}\left(A_{m}\right)\neq A_{m}$ we set $A_{m+1}=C_{s}\left(A_{m}\right)$.
Otherwise we set $A_{m+1}=A_{m}$. 

Let $K_{i}$ be the $i^{th}$ set in $\left[k\right]^{n}$ with respect
to the $\leq$-order. Define $f\left(A\right)=\sum_{i=1}^{k^{n}}i\mathbb{I}\left\{ K_{i}\in A\right\} $,
where $\mathbb{I}\left\{ K_{i}\in A\right\} $ denotes the indicator
function of the event $K_{i}\in A$. By the construction of the compression
operator $C_{s}$, it is easy to verify that we have $f\left(C_{s}\left(A\right)\right)\leq f\left(A\right)$
for all $s\in\left\{ 1,\dots,n\right\} $, and for a given $s$ the
equality holds if and only if we have $C_{s}\left(A\right)=A$. Since
$f\left(A\right)$ is always a non-negative integer, it follows that
the sequence $f\left(A_{m}\right)$ is eventually constant. Thus there
exists $r$ for which we have $f\left(C_{s}\left(A_{r}\right)\right)=f\left(A_{r}\right)$
for all $s$, and hence we have $C_{s}\left(A_{r}\right)=A_{r}$ for
all $s$. Therefore $A_{r}$ is compressed. 

By Claim 1, if follows that we have $\left|d\left(A_{i}\right)\right|\geq\left|d\left(A_{i+1}\right)\right|$
for all $0\leq i\leq r-1$. Hence it follows that  $\left|d\left(A\right)\right|\geq\left|d\left(A_{r}\right)\right|$,
which completes the proof. 
\end{proof}
From now on, let $A\subseteq\left[k\right]^{n}$ denote a compressed
set and let $C$ denote the initial segment of $\left[k\right]^{n}$
of the same size. By Claim 2, it suffices to prove that $\left|d\left(A\right)\right|\geq\left|d\left(C\right)\right|$. 

\subsection{The special case $n=2$. }

Before moving on to the general case, we prove Theorem 1 when $n=2$.
This turns out to be convenient as $n=2$ is too small in one part
of the general argument. 
\begin{claim}
When $n=2$, Theorem 1 holds for a compressed set $A$. 
\end{claim}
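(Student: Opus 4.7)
The plan is to exploit the very restrictive shape of compressed sets in two dimensions. The $\leq$-order on $[k]^1=[k]$ coincides with $0<1<\dots<k-1$, so each slice $A_{s,t}$ being an initial segment means $A_{s,t}=\{0,1,\dots,|A_{s,t}|-1\}$. Demanding this in both coordinates makes $A$ a Young diagram: $A=\{(i,j)\in[k]^2:j<r_i\}$ for some non-increasing sequence $k\geq r_0\geq r_1\geq\dots\geq r_{k-1}\geq0$. Since initial segments of $\leq$ are themselves compressed, the same description applies to $C$.

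I would then read off $|d(A)|$ from the row lengths. A short case check gives that for $i\geq1$, $(i,0)\in d(A)$ iff $r_i\geq2$; that for $j\geq1$, $(0,j)\in d(A)$ iff some row $i\geq1$ has $r_i>j$, which by monotonicity is equivalent to $r_1\geq j+1$; and that $00\in d(A)$ iff $A$ contains a rank-$1$ point, which in a Young diagram holds iff $|A|\geq2$. Hence
\[
|d(A)|=\max(r_1-1,0)+|\{i\geq1:r_i\geq2\}|+\mathbb{1}[|A|\geq2],
\]
and the same formula applies to $C$. Since the indicator depends only on $|A|$, the claim reduces to showing $g(A)\geq g(C)$ where $g(A):=\max(r_1-1,0)+|\{i\geq1:r_i\geq2\}|$.

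The key inequality is a lower bound on $g(A)$ in terms of $|A|$. Writing $m=r_1$ and $p=|\{i\geq1:r_i\geq2\}|$, monotonicity gives $r_0\leq k$, $r_i\in[2,m]$ for $1\leq i\leq p$, and $r_i\leq 1$ otherwise, so
\[
|A|\leq k+pm+(k-1-p)=2k-1+p(m-1).
\]
As $g(A)=(m-1)+p$, the elementary estimate $p(m-1)\leq\lfloor g(A)^2/4\rfloor$ then yields $|A|\leq 2k-1+\lfloor g(A)^2/4\rfloor$, an explicit lower bound on $g(A)$ depending only on $|A|$.

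Finally I would verify that the initial segment $C$ saturates this bound. The $\leq$-order starts with $00$, then enumerates the one-zero layer $10,01,20,02,\dots,(k-1)0,0(k-1)$ (throughout which $g(C)=0$), and then lists the zero-zero points grouped by the value of $\max(x_1,x_2)$, each such layer appearing in a canonical order. Tracking $g(C)$ through these layers shows that it grows by precisely the minimum amount allowed by the bound above. This layer-by-layer bookkeeping is the main technical step, requiring a small case analysis on $|A|$ relative to the layer boundaries, but it is essentially routine once the $\leq$-order has been spelled out; combined with the preceding lower bound it gives $g(A)\geq g(C)$ and hence the claim.
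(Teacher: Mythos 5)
Your argument is correct and follows essentially the same route as the paper: both reduce to bounding the non-$00$ part of $d(A)$ by a quadratic estimate --- your $|A|\leq 2k-1+p(m-1)$ together with $p(m-1)\leq\lfloor g^2/4\rfloor$ is exactly the paper's argument that the rank-two points $X$ fit into an $r\times s$ box so $|X|\leq rs$ and $r+s\geq\lceil\sqrt{4|X|}\rceil$ --- and then verify that $C$ attains the resulting bound by explicit casework on $|A|$ relative to the layer boundaries. The Young-diagram description of two-dimensional compressed sets is a tidy framing, but the key estimate and overall structure coincide with the paper's proof.
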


\begin{proof}
The claim is trivial if $\left|A\right|=1$. Let $C$ denote the initial
segment of $\left[k\right]^{n}$ of size $\left|A\right|$. If $2\leq\left|A\right|\leq2k-1$,
then $C$ is a subset of $B_{\geq1}$ and hence it follows that $d\left(C\right)=\left\{ 00\right\} $.
Thus it is evident that $\left|d\left(A\right)\right|\geq\left|d\left(C\right)\right|$. 

Now suppose that $\left|A\right|\geq2k$. We write $A$ as $A=A_{0}\cup X$,
where $A_{0}=A\cap B_{\geq1}$ and 
\[
X=A\setminus A_{0}=\left\{ x_{1}x_{2}\in A:x_{1}\neq0\text{ and }x_{2}\neq0\right\} .
\]
Since $\left|B_{\geq1}\right|=2k-1$, it follows that $\left|X\right|\geq\left|A\right|-2k+1$,
and in particular $X$ is non-empty. 

Let $x_{1},\dots,x_{r}\in\left\{ 1,\dots,k-1\right\} $ and $y_{1},\dots,y_{s}\in\left\{ 1,\dots,k-1\right\} $
be chosen so that 
\[
d\left(X\right)=\left\{ 0x_{1},\dots,0x_{r},y_{1}0,\dots,y_{s}0\right\} .
\]
Then we certainly have
\[
X\subseteq\left\{ y_{j}x_{i}\,:\,1\leq j\leq s,\,1\leq i\leq r\right\} ,
\]
which implies that $\left|X\right|\leq rs$. Since for non-negative
integers $r$ and $s$ we have $r+s\geq\left\lceil \sqrt{4rs}\right\rceil $,
it follows that $\left|d\left(X\right)\right|\ge\left\lceil \sqrt{4\left|X\right|}\right\rceil $. 

Since $A$ is compressed and $\left|A\right|>1$, it follows that
$A$ contains a point $x_{1}x_{2}\ne00$ with $x_{1}=0$ or $x_{2}=0$.
In particular, we must have $00\in d\left(A\right)$. Hence it follows
that $d\left(A\right)=\left\{ 00\right\} \cup d\left(X\right)$. As
$\left|X\right|\geq\left|A\right|-2k+1$, we have 
\[
\left|d\left(A\right)\right|\geq1+\left\lceil \sqrt{4\left(\left|A\right|-2k+1\right)}\right\rceil .
\]

If $r^{2}+2k\leq\left|A\right|\leq r^{2}+r+2k-1$ for some $1\leq r\leq k-2$,
it is easy to verify that $C$ satisfies $C\subseteq B_{\geq1}\cup\left\{ x_{1}x_{2}:1\leq x_{1}\leq r+1,\,1\leq x_{2}\leq r\right\} $.
In particular, we have 

\[
\left|d\left(C\right)\right|\leq1+\left(r+1\right)+r=1+\left\lceil \sqrt{4\left(\left|A\right|-2k+1\right)}\right\rceil ,
\]
and hence it follows that $\left|d\left(A\right)\right|\geq\left|d\left(C\right)\right|$.
If $r^{2}+r+2k\leq\left|A\right|\leq\left(r+1\right)^{2}+2k-1$ for
some $1\leq r\leq k-2$, it is easy to verify that $C$ satisfies
$C\subseteq B_{\geq1}\cup\left\{ x_{1}x_{2}:1\leq x_{1},x_{2}\leq r+1\right\} $.
Again, we have 
\[
\left|d\left(C\right)\right|\leq1+2\left(r+1\right)=1+\left\lceil \sqrt{4\left(\left|A\right|-2k+1\right)}\right\rceil .
\]
Thus it follows that $\left|d\left(A\right)\right|\geq\left|d\left(C\right)\right|$,
which completes the proof. 
\end{proof}

\subsection{General observations}

From now on we suppose that $n\geq3$, and our aim is to prove Theorem
1 for a compressed set $A\subseteq\left[k\right]^{n}$. As in the
previous section, the proof is trivial if $\left|A\right|=1$ or $2\leq\left|A\right|\leq n\left(k-1\right)+1$,
as in these cases the $d$-shadow of the initial segment has size
$0$ or $1$ respectively. Thus we may assume that $\left|A\right|\geq n\left(k-1\right)+2$. 

We say that $B\subseteq\left[k\right]^{n}$ is a \textit{down-set}
if for any $y\in B$ and for a point $x\in\left[k\right]^{n}$ satisfying
$x_{j}\leq y_{j}$ for all $j$ we have $x\in B$. In this subsection
we make some observations on the structure of compressed set $A$.
We start by proving that $A$ is a down-set, and in fact this follows
from a slightly stronger statement. Secondly, recall that for an initial
segment $C$ there exists $r$ satisfying $B_{\geq r}\subseteq C\subseteq B_{\geq r+1}$.
For compressed sets the same conclusion does not necessarily hold,
but as a second structural claim we prove that for compressed sets
there exists $r$ for which $d\left(B_{\geq r}\right)\subseteq d\left(A\right)\subseteq d\left(B_{\geq r+1}\right)$.
Since we only consider the size of the $d$-shadow of $A$, in a sense
this is 'equally good' for our purposes as having $B_{\geq r}\subseteq A\subseteq B_{\geq r+1}$.
Note that the second observation allows us to focus only on $d\left(A\cap B_{r+1}\right)$. 

Before proving that $A$ and $d\left(A\right)$ are down-sets, we
start by a straightforward observation that turns out to be very 
useful. Given points $x,y$ with $x\leq y$ and $y\in A$, suppose
that there exists $i$ for which $x_{i}=y_{i}$, and for convenience
set $t=x_{i}$. Since $A$ is compressed, it follows that $A_{i,t}$
is an initial segment. Let $x'$ and $y'$ be the points obtained
from $x$ and $y$ by removing the $i^{th}$ coordinate, and note
that $y'\in A_{i,t}$. Since $x\leq y$ and $x_{i}=y_{i}$, it follows
that $x'\leq y'$. Since $A$ is compressed and $y'\in A_{i,t}$,
it follows that $x'\in A_{i,t}$ and hence we also have $x\in A$.
In summary, we have proved that 

\begin{equation}
x\leq y,\,y\in A,\,x_{i}=y_{i}\text{ for some }i\Rightarrow x\in A.\label{eq:10-1}
\end{equation}
This turns out to be a very useful fact that we will use throughout
the rest of the proof. As a simple consequence, we now obtain that
$A$ and $d\left(A\right)$ are down-sets. 
\begin{claim}
Let $A\subseteq\left[k\right]^{n}$ be a compressed set. Then $A$
is a down-set, and furthermore $d\left(A\right)$ is also a down-set. 
\end{claim}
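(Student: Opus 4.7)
The plan is to exploit the implication~\eqref{eq:10-1}: if $x \leq y$ in the $\leq$-order, $y \in A$, and $x_i = y_i$ for some $i$, then $x \in A$. Both halves of the claim will follow once we compare coordinatewise descent with the special $\leq$-order.

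I would first record a one-step order lemma: if $z$ is obtained from $y \in [k]^n$ by lowering exactly one coordinate $p$ (so $z_p < y_p$ and $z_j = y_j$ for $j \neq p$), then $z \leq y$ in the $\leq$-order. The verification is short. If $z_p = 0 < y_p$, then $|R_0(z)| = |R_0(y)| + 1$, and condition~1 of the definition of $\leq$ applies. Otherwise $|R_0(z)| = |R_0(y)|$; because $z$ and $y$ differ only at position $p$, the largest letter $\ell$ with $R_\ell(z) \neq R_\ell(y)$ is $\ell = y_p$, and $R_\ell(z) \Delta R_\ell(y) = \{p\} \subseteq R_\ell(y)$, so condition~2 applies.

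For the first assertion, take $y \in A$ together with $x$ satisfying $x_j \leq y_j$ for every $j$ and $x \neq y$. Pick any coordinate $p$ with $x_p < y_p$ and let $z$ agree with $y$ except at position $p$, where $z_p = x_p$. The one-step lemma gives $z \leq y$ in the $\leq$-order, and since $n \geq 2$ there is some $j \neq p$ with $z_j = y_j$, so \eqref{eq:10-1} yields $z \in A$. Iterating this coordinate-by-coordinate descent at most $n$ times replaces $y$ by $x$ while staying inside $A$, so $x \in A$.

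For the second assertion, let $y \in d(A)$ with $x_j \leq y_j$ for all $j$, and choose $y' \in A$ and a coordinate $i$ with $y'_i \geq 1$, $y_i = 0$, and $y_j = y'_j$ for $j \neq i$. Since $y_i = 0$ forces $x_i = 0$, define $x''$ by $x''_i = y'_i$ and $x''_j = x_j$ for $j \neq i$; then $x'' \leq y'$ coordinatewise, so by the first part $x'' \in A$, and flipping the $i$th coordinate of $x''$ (which equals $y'_i \geq 1$) back to $0$ recovers $x$, whence $x \in d(A)$. The only genuinely new content in the whole argument is the one-step order comparison, which is a direct check against the two-clause definition of $\leq$, so I do not anticipate any serious obstacle.
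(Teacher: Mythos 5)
Your argument is correct and essentially matches the paper's: both halves rest entirely on the compressedness implication (\ref{eq:10-1}), and your treatment of $d(A)$ is nearly verbatim the paper's. The only cosmetic difference is that for the first half you descend from $y$ to $x$ one coordinate at a time (with an explicit one-step order comparison), while the paper reaches $x$ in two applications of (\ref{eq:10-1}) via the single intermediate point $z$ with $z_1=x_1$ and $z_s=y_s$ for $s\ge 2$.
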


\begin{proof}
Let $y\in A$ and let $x\in\left[k\right]^{n}$ be a point satisfying
$x_{i}\leq y_{i}$ for all $i$. Let $z\in\left[k\right]^{n}$ be
obtained by taking $z_{1}=x_{1}$ and $z_{s}=y_{s}$ for all $s\geq2$,
and note that we certainly have $x\leq z\leq y$. Since $y_{2}=z_{2}$
and $x_{1}=z_{1}$, two applications of (\ref{eq:10-1}) implies that
$z\in A$ and $x\in A$, which completes the proof of the first part.

Let $y\in d\left(A\right)$ and let $x\in\left[k\right]^{n}$ for
which $x_{i}\leq y_{i}$ for all $i$. Choose $v\in A$ satisfying
$y\in d\left(v\right)$, and let $a$ be the unique index for which
$v_{a}\neq0$ but $y_{a}=0$. Let $u$ be the point obtained by setting
$u_{a}=v_{a}$ and $u_{j}=x_{j}$ for all $j\neq a$. Then for $j\neq a$
we have $u_{j}=x_{j}\leq y_{j}=v_{j}$. Since we also have $u_{a}=v_{a}$,
the first part implies that $u\in A$. Since $x_{a}\leq y_{a}=0$,
we must have $x_{a}=0$. Hence we have $x\in d\left(u\right)\subseteq d\left(A\right)$,
which completes the proof. 
\end{proof}
Before moving on to the second structural fact, we introduce some
notation that will be used throughout the rest of the paper. For $x=x_{1}\dots x_{n}\in\mathbb{N}^{n}$,
set $m\left(x\right)=\max\left(x_{1},\dots,x_{n}\right)$ and recall
that for all $i\in\mathbb{N}$ we defined $R_{i}\left(x\right)=\left\{ j:x_{j}=i\right\} $.
Define $c\left(x\right)=\left(m\left(x\right),R_{m\left(x\right)}\left(x\right),\left|R_{0}\left(x\right)\right|\right)$.
That is, the first coordinate of $c\left(x\right)$ is $\max\left(x_{1},\dots,x_{n}\right)$,
the second coordinate is the set of all positions where this maximum
is attained and the last coordinate is the number of $x_{i}$'s that
equal 0. Define the \textit{component} of $x$ by $C_{x}=\left\{ y\in\left[k\right]^{n}:c\left(y\right)=c\left(x\right)\right\} $. 

Note that we have $x\in\left[k\right]^{n}$ if and only if $C_{x}\subseteq\left[k\right]^{n}$,
as every $y\in C_{x}$ has the same maximum value of a coordinate.
Moreover, since every $y\in C_{x}$ also shares the same positions
of the maximum coordinates and same number of coordinates that equal
0, it follows that for any other class $C_{z}$ either all the points
of $C_{x}$ occur before the points of $C_{z}$ with respect to the
$\leq$-order, or all the points of $C_{x}$ occur after the points
of $C_{z}$. Hence we can order the classes inside $\left[k\right]^{n}$
as $C_{1},\dots,C_{m}$ such that for any $i\neq j$, $x\in C_{i}$
and $y\in C_{j}$ we have $x\leq y$ if and only if $i<j$.

Note that for all $x,y\in C_{i}$ there exists $r$ such that $x_{r}=y_{r}$.
Indeed, any $r\in R_{m\left(x\right)}\left(x\right)=R_{m\left(y\right)}\left(y\right)$
works. Hence (\ref{eq:10-1}) implies that for any $i$ we have $A\cap C_{i}=\emptyset$
or there exists $y_{i}\in C_{i}$ such that $A\cap C_{i}=\left\{ x\in C_{i}\,:\,x\leq y_{i}\right\} $. 

For fixed $s$ and $t$, the classes of the form $\left(s,\,A,\,t\right)$
for $A\in\left[n\right]^{\left(\leq n-t\right)}$ occur consecutively
with respect to the $\leq$-order. Furthermore, from the definition
of the $\leq$-order it is easy to verify that these classes occur
in the order induced by the binary order on $\left[n\right]^{\left(\leq n-t\right)}$.
In particular, if $A_{i}$ and $A_{i+1}$ are two consecutive sets
in binary order with $\left|A_{i}\right|\leq n-t$ and $\left|A_{i+1}\right|\leq n-t$,
then $\left(s,\,A_{i},t\right)$ immediately precedes $\left(s,\,A_{i+1},\,t\right)$
in the order of classes. 

Recall that $B_{s}$ is defined to be the set of those points with
exactly $s$ coordinates that equal 0, i.e. 
\[
B_{s}=\left\{ x\in\left[k\right]^{n}:\left|R_{0}\left(x\right)\right|=s\right\} ,
\]
and $B_{\geq s}=\bigcup_{i=s}^{n}B_{i}$. Note that if $X$ is an
initial segment of the $\leq$-order, there exists $r$ such that
$B_{\geq r+1}\subseteq X\subset B_{\geq r}$. Our next aim is to prove
that for any compressed set $A$ there exists $r$ so that $d\left(B_{r}\right)\subseteq d\left(A\right)\subseteq d\left(B_{r+1}\right)$. 
\begin{claim}
Let $A$ be a compressed set, and let $0\leq p\leq n$ be the minimal
index for which $A\cap B_{p}\neq\emptyset$. Then we have $d\left(B_{\geq p+1}\right)\subseteq d\left(A\right)\subseteq d\left(B_{p}\right)$.
In particular, if $r$ is chosen such that $\left|B_{\geq r+1}\right|<\left|A\right|\leq\left|B_{\geq r}\right|$,
it suffices to prove Theorem 1 for compressed sets $A$ that satisfy
$B_{\ge r+1}\subseteq A\subseteq B_{\geq r}$. 
\end{claim}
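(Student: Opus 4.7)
I read the middle expression $d(B_p)$ as a typo for $d(B_{\geq p})$, since the inclusion $d(A)\subseteq d(B_p)=B_{p+1}$ fails as soon as $A$ contains a point with more than $p$ zeros (which it generally does, being a down-set). With this reading the inclusion $d(A)\subseteq d(B_{\geq p})$ is immediate: by minimality of $p$ we have $A\subseteq B_{\geq p}$, and $d$ is monotone.

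The substantive half is $d(B_{\geq p+1})\subseteq d(A)$, equivalently $B_{\geq p+2}\subseteq d(A)$. Since $d(A)$ is a down-set by Claim~3, it suffices to verify $B_{p+2}\subseteq d(A)$. Fix $y\in B_{p+2}$ with zero-set $Z_y$ and pick any $w\in A\cap B_p$ with zero-set $Z_w$. My plan is to produce $z\in A\cap B_{p+1}$ of the form ``$y$ with some zero $y_s$ flipped to $1$'', which immediately yields $y\in d(z)\subseteq d(A)$. Because $A$ is compressed, each slice $A_{s',t}$ is an initial segment in $[k]^{n-1}$; if I can locate an index $s'\in Z_w\cap (Z_y\setminus\{s\})$, then $A_{s',0}$ is an initial segment containing $w|_{[n]\setminus\{s'\}}$ (which has $p-1$ zeros in $[k]^{n-1}$) and therefore contains all of $B_{\geq p}$ in $[k]^{n-1}$; since $z|_{[n]\setminus\{s'\}}$ has $p$ zeros, this forces $z\in A$. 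This works whenever $|Z_w\cap Z_y|\geq 1$ by choosing $s\in Z_y$ so as not to exhaust $Z_w\cap Z_y$. The remaining case $Z_w\cap Z_y=\emptyset$ forces $2p+2\leq n$; here I would pick $s''\notin Z_w\cup Z_y$ and argue via the slice $A_{s'',w_{s''}}$, which is an initial segment large enough to contain $z|_{[n]\setminus\{s''\}}$ provided $y_{s''}\leq w_{s''}$; such a position $s''$ can always be arranged by either choosing a different $w\in A\cap B_p$ or iterating the compression argument down the slice.

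For the ``in particular'' reduction, split on $p$ versus $r$. If $p<r$, so $p+2\leq r+1$, the first part gives $|d(A)|\geq|B_{\geq p+2}|\geq|B_{\geq r+1}|\geq|d(C)|$ (the last from $C\subseteq B_{\geq r}$), and Theorem~1 holds for $A$ directly without any reduction. If $p=r$, define $A'=B_{\geq r+1}\cup((A\cap B_r)\setminus F)$, where $F\subseteq A\cap B_r$ is any subset of size $|B_{\geq r+1}\setminus A|$ (valid since $|A|>|B_{\geq r+1}|$ implies $|A\cap B_r|\geq|B_{\geq r+1}\setminus A|$). Then $|A'|=|A|$ and $B_{\geq r+1}\subseteq A'\subseteq B_{\geq r}$; using the shadow inclusion $B_{\geq r+2}\subseteq d(A)$ from the first part, one computes $|d(A)|=|B_{\geq r+2}|+|d(A\cap B_r)|$ and $|d(A')|=|B_{\geq r+2}|+|d((A\cap B_r)\setminus F)|$, so $|d(A')|\leq|d(A)|$. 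Compressing $A'$ via Claim~1 yields a compressed $A''$ of the same size with $|d(A'')|\leq|d(A)|$ and the sandwich $B_{\geq r+1}\subseteq A''\subseteq B_{\geq r}$ preserved, since both $B_{\geq r+1}$ and $B_{\geq r}$ are themselves initial segments of the $\leq$-order. The main obstacle will be the hard inclusion in the first part, and in particular the awkward sub-case $Z_w\cap Z_y=\emptyset$, which demands the compression argument outside both zero sets.
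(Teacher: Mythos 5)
Your identification of the typo, the reduction to $B_{p+2}\subseteq d(A)$ via the down-set property of $d(A)$ (that is Claim 4, not Claim 3), the slice argument when $Z_w\cap Z_y\neq\emptyset$, and the ``in particular'' reduction are all sound and essentially the paper's argument. The reduction differs only cosmetically: you compress $A'$ afterwards via Claim 1/Claim 2 rather than exhibiting a compressed witness directly, and your observation that the compression operators preserve the sandwich $B_{\geq r+1}\subseteq\cdot\subseteq B_{\geq r}$ is correct.

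The case $Z_w\cap Z_y=\emptyset$ is a genuine gap, and the patch you sketch does not close it. When $2p+2=n$ one can have $Z_w\cup Z_y=[n]$, so there is no candidate $s''$ at all; and even when such $s''$ exist, the requirement $y_{s''}\leq w_{s''}$ can fail for every one of them (take $w$ normalised to a $0$--$1$ vector and $y$ whose nonzero entries are all at least $2$), while ``choosing a different $w$'' is not available in general since $A\cap B_p$ can consist of a single point. The fix is to slice at the flipped coordinate rather than outside both zero-sets: choose $s\in Z_y\setminus Z_w$. This set always has at least two elements because $|Z_y|=p+2>p=|Z_w|$, so the dichotomy disappears entirely. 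Let $z$ be $y$ with $y_s$ flipped to $1$. Since $A$ is a down-set one may first decrease $w_s$ to $1$; then the initial segment $A_{s,1}\subseteq[k]^{n-1}$ contains a point with $p$ zeros, hence contains every point of $[k]^{n-1}$ with at least $p+1$ zeros, and $z|_{[n]\setminus\{s\}}$ has $p+1$ zeros, so $z\in A$ and $y\in d(z)$. This is exactly the paper's move phrased through (\ref{eq:10-1}): it takes a $0$--$1$ representative $u\in A\cap B_p$, picks $i\in R_0(x)\cap([n]\setminus R_0(u))$ (nonempty by the same count, which is why the paper's own second subcase there is actually vacuous), flips $x_i$ to $1$, and applies (\ref{eq:10-1}). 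Choosing the intersection $Z_w\cap Z_y$ instead of the difference $Z_y\setminus Z_w$ is precisely what introduced the spurious hard case.
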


\begin{proof}
Let $p$ be the minimal index for which $A\cap B_{p}\neq\emptyset$,
and let $u$ be the minimal point under the $\leq$-order in $B_{p}$.
Since $A$ is a down-set and $u\in B_{p}$, it follows that every
coordinate of $u$ must be either $0$ or $1$. Hence there exists
a set $X$ of size $n-p$ so that $u_{i}=\mathbb{I}\left\{ i\in X\right\} $. 

Let $x\in B_{\geq p+2}=d\left(B_{\geq p+1}\right)$. If $R_{0}\left(x\right)\cap X\neq\emptyset$,
choose $i\in R_{0}\left(x\right)\cap X$ and consider the point $y$
obtained by taking $y_{j}=x_{j}$ for $j\neq i$ and $y_{i}=1$. Since
$i\in X$, it follows that $u_{i}=1=y_{i}$. On the other hand, since
$\left|R_{0}\left(y\right)\right|=p+2-1=p+1>p=\left|R_{0}\left(u\right)\right|$
it follows that $y\leq u$. Hence (\ref{eq:10-1}) implies that $y\in A$,
and thus we have $x\in d\left(A\right)$. 

If $R_{0}\left(x\right)\cap X=\emptyset$, choose any $i\in R_{0}\left(x\right)$
and again consider the point  $y$ obtained by taking $y_{j}=x_{j}$
for $j\ne i$ and $y_{i}=1$. Let $j\in R_{0}\left(x\right)\setminus\left\{ i\right\} $,
and note that such $j$ exists as $\left|R_{0}\left(x\right)\right|\geq p+2\geq2$.
As $j\not\in X$, we have $u_{j}=0=x_{j}=y_{j}$. Similarly as in
the first case, we have $\left|R_{0}\left(y\right)\right|>\left|R_{0}\left(u\right)\right|$
and thus it follows that $y\leq u$. Hence (\ref{eq:10-1}) implies
that $y\in A$, and therefore we have $x\in d\left(A\right)$, which
completes the proof of the first part. 

For the second part, suppose that $A'$ is a compressed set satisfying
$\left|B_{\geq r+1}\right|<\left|A'\right|\leq\left|B_{\geq r}\right|$
for some $r$. Let $C$ denote the initial segment of the $\leq$-order
with $\left|C\right|=\left|A'\right|$, and assume that Theorem 1
holds for any compressed set $A\subseteq\left[k\right]^{n}$ satisfying
$\left|A\right|=\left|A'\right|$ and $B_{\geq r+1}\subseteq A\subseteq B_{\geq r}$.
Let $p$ denote the least integer for which $A'\cap B_{p}\neq\emptyset$. 
Since $\left|A'\right|>\left|B_{\geq r+1}\right|$, it follows that
$p\leq r$. 

If $p<r$, the first part implies that $d\left(B_{\ge r}\right)\subseteq d\left(B_{\geq p+1}\right)\subseteq d\left(A'\right)$,
and since $C\subseteq B_{\ge r}$ it certainly follows that $\left|d\left(A'\right)\right|\geq\left|d\left(C\right)\right|$.
If $p=r$, the choice of $p$ implies that $A'\subseteq B_{\geq r}$,
and the first part implies that $d\left(B_{\geq r+1}\right)\subseteq d\left(A'\right)$.
Let $D'=A'\cap B_{r}$, and note that the last two observations imply
that $d\left(A'\right)=d\left(B_{\geq r+1}\right)\cup d\left(D'\right)$.
Let $C'=C\cap B_{r}$, and note that since $\left|A'\right|=\left|C\right|$
and $B_{\geq r+1}\subseteq C$, it follows that $\left|C'\right|\leq\left|D'\right|$.
Let $D$ be the set of $\left|C'\right|$ smallest points of $D'$
under the $\leq$-order, and let $A=B_{\geq r+1}\cup D$. It is easy
to see that $\left|A\right|=\left|C\right|=\left|A'\right|$ and that
$A$ is compressed, and by the construction of $A$ it follows that
$B_{\geq r+1}\subseteq A\subseteq B_{\geq r}$. Furthermore, since
$D\subseteq D'$ and $d\left(B_{\geq r+1}\right)\subseteq d\left(A'\right)$,
it follows that $d\left(A\right)\subseteq d\left(A'\right)$. In particular,
we must have $\left|d\left(A'\right)\right|\geq\left|d\left(A\right)\right|$,
which completes the proof. 
\end{proof}

\subsection{Proof of Theorem 1 when $n\geq3$. }

Let $A$ be a compressed set, let $r$ be chosen such that $\left|B_{\ge r+1}\right|<\left|A\right|\leq\left|B_{\ge r}\right|$
and let $C$ be the the initial segment of the $\leq$-order with $\left|C\right|=\left|A\right|$.
Recall that by Claim 5 we may assume that we have $B_{\geq r+1}\subseteq A\subseteq B_{\geq r}$.
From now on we set $D=A\cap B_{r}$. Since $\left|d\left(A\right)\right|=\left|B_{\geq r+2}\right|+\left|d\left(D\right)\right|$
and $\left|d\left(C\right)\right|=\left|B_{\geq r+2}\right|+\left|d\left(C\cap B_{r}\right)\right|$,
it suffices to focus on comparing $d\left(D\right)$ with $d\left(C\cap B_{r}\right)$. 

We split the proof into two cases depending on whether $\left|A\right|\leq\left|B_{\ge1}\right|$
or $\left|A\right|>\left|B_{\geq1}\right|$. 

\subsection*{Case 1. $\left|A\right|\protect\leq\left|B_{\ge1}\right|.$}

Since $\left|A\right|\leq\left|B_{\geq1}\right|$, it follows that
$A=B_{\geq r+1}\cup D$ for some $D\subseteq B_{r}$ and $r\geq1$.
Recall that Theorem 1 is trivial when $\left|A\right|\leq n\left(k-1\right)+1$,
and hence we may assume that $r\leq n-2$. 

Note that each of the sets $B_{\geq r+1}\cup\left(\left[s\right]^{n}\cap B_{r}\right)$
is an initial segment of the $\leq$-order, and recall that $\left[s\right]^{n}=\left\{ 0,\dots,s-1\right\} ^{n}$.
We start by comparing the sets $d\left(D\right)$ and $d\left(\left[s\right]^{n}\cap B_{r}\right)$
for an appropriately chosen $s$. 
\begin{claim}
Let $s=\max\left\{ m\left(x\right):x\in D\right\} =\max\left\{ x_{i}:x_{1}\dots x_{n}\in D\right\} $.
Then we have $d\left(\left[s\right]^{n}\cap B_{r}\right)\subseteq d\left(A\right)$.
In particular, if $r$ and $s$ are chosen so that 
\[
\left|B_{\geq r+1}\cup\left(\left[s\right]^{n}\cap B_{r}\right)\right|<\left|A\right|\leq\left|B_{\geq r+1}\cup\left(\left[s+1\right]^{n}\cap B_{r}\right)\right|,
\]
then it suffices to prove Theorem 1 for compressed sets $A$ that
satisfy 
\[
B_{\ge r+1}\cup\left(\left[s\right]^{n}\cap B_{r}\right)\subseteq A\subseteq B_{\geq r}\cup\left(\left[\left(\left[s+1\right]^{n}\cap B_{r}\right)\right]\right).
\]
\end{claim}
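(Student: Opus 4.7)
The plan is to handle the inclusion $d([s]^n\cap B_r)\subseteq d(A)$ first, and then use it to establish the ``In particular'' reduction.

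If $s\leq 1$, the set $d([s]^n\cap B_r)$ is empty and there is nothing to prove, so assume $s\geq 2$. Under this assumption one checks that $d([s]^n\cap B_r)=[s]^n\cap B_{r+1}$, because every $y$ in the right-hand side admits a preimage in $[s]^n\cap B_r$ obtained by filling one of its zeros with a value in $\{1,\dots,s-1\}$, which is non-empty when $s\geq 2$. Fix $y\in[s]^n\cap B_{r+1}$ and pick $x^*\in D$ with $x^*_a=s$ for some index $a\in[n]$; the goal is to produce some $v\in A$ with $y\in d(v)$.

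The natural candidate, when $y_a=0$, is the point $v\in[k]^n$ defined by $v_a=s$ and $v_j=y_j$ for $j\neq a$, so that $v\in B_r$ and $v_a=s=x^*_a$. Writing $v=s_a w$ and $x^*=s_a w^*$ in the paper's notation (so $w,w^*\in[k]^{n-1}$), we have $v\in A$ iff $w\in A_{a,s}$; since $A$ is compressed, $A_{a,s}$ is an initial segment of the $\leq$-order on $[k]^{n-1}$ containing $w^*$, so it suffices to show $w\leq w^*$. In the principal sub-case $|R_s(x^*)|\geq 2$, the point $w^*$ has maximum coordinate $s$ while $w\in[s]^{n-1}$ has maximum at most $s-1$; combined with the fact that $w$ and $w^*$ have the same number of zeros, this immediately gives $w<w^*$ via the second condition of $\leq$ at index $s$, and we conclude $v\in A$. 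The remaining sub-cases, where either $y_a\neq 0$ or $|R_s(x^*)|=1$, are handled by switching indices: either $y$ equals the point obtained from $x^*$ by setting its $a$-th coordinate to $0$, giving $y\in d(x^*)\subseteq d(A)$ for free, or else we pick some $i\in R_0(y)$ for which $x^*_i\neq 0$, fill with $t=x^*_i$, and appeal to the initial-segment structure of $A_{i,x^*_i}$ just as before. I expect the main technical obstacle to be the exhaustive verification that in every sub-case one of these alternative fills succeeds, which will force further case splitting based on how the support of $y$ relates to the support of $x^*$.

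For the ``In particular'' reduction, follow the template used in the second part of Claim 5. Given a compressed $A$ in the stated size range, decompose $A=B_{\geq r+1}\cup D$ with $D\subseteq[s+1]^n\cap B_r$ automatically (by the defining property of $s$). Let $D_H'\subseteq D\setminus[s]^n$ be a subset of size $|D|-|[s]^n\cap B_r|$, which is non-negative and at most $|D\setminus[s]^n|$ by the size hypothesis, and set $A'=B_{\geq r+1}\cup([s]^n\cap B_r)\cup D_H'$. Then $|A'|=|A|$, and combining the first part of the claim with the inclusion $D_H'\subseteq D$ yields $d(A')\subseteq d(A)$, whence $|d(A')|\leq|d(A)|$. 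Applying Claim 1's compression operators to $A'$ then produces a compressed set of the same size with no larger shadow, and one checks that the stated inclusions $B_{\geq r+1}\cup([s]^n\cap B_r)\subseteq A'\subseteq B_{\geq r+1}\cup([s+1]^n\cap B_r)$ are preserved under these compressions, completing the reduction.
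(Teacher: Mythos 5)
Your first part has a real gap, not merely unfinished bookkeeping. You try to exhibit a preimage $v$ of $y$ by filling a zero of $y$ at a position $i$ with the value $x^*_i\neq 0$, and then deduce $v\in A$ from the initial-segment structure of $A_{i,x^*_i}$ by comparing maxima. This works when the removed coordinate $i$ is not the position $a$ where $x^*$ attains the value $s$, because then the slice $w^*$ of $x^*$ still has maximum $s$ while the slice $w$ of $v$ lies in $[s]^{n-1}$. But in the sub-case $|R_s(x^*)|=1$ and $R_0(y)=R_0(x^*)\cup\{a\}$, the only index $i\in R_0(y)$ with $x^*_i\neq 0$ is $i=a$, and removing coordinate $a$ strips the unique maximum from $x^*$, so the max comparison collapses and one need not have $w\leq w^*$. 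Concretely, with $n=3$, $r=1$, $s=3$, $x^*=013$ and $y=020$, the only available fill gives $v=023$, whose slice $02$ is strictly \emph{larger} than $x^*$'s slice $01$ in the $\leq$-order on $[k]^2$, so your argument stalls even though $020\in d(A)$ (via $021$). The paper's proof sidesteps this entirely: it takes the \emph{least} $x\in D$ with $m(x)=s$ (guaranteeing a unique $s$-index $i$), chooses a zero position $j\neq i$ of $v$, and arranges the agreement $z_j=x_j$ at that $j$ — \emph{not} at the fill position — filling either $j$ with $x_j$ (if $x_j\neq 0$) or some other zero $p\neq j$ with $1$ (if $x_j=0$). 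Agreeing at a coordinate different from the fill position is the key move your write-up is missing, and it is not something the anticipated ``further case splitting'' automatically supplies.

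Your ``in particular'' reduction also has a gap: you assert $D\subseteq[s+1]^n\cap B_r$ ``automatically, by the defining property of $s$,'' but in the second part $s$ is fixed by the size inequality, not by $s=\max\{m(x):x\in D\}$. If $\max\{m(x):x\in D\}>s$, then $D\not\subseteq[s+1]^n$ and your replacement set $A'$ fails the required upper inclusion; it is not clear that subsequent compressions restore it, since slicewise upper bounds are not controlled by the total size alone. The paper treats this case ($t\geq s+1$ in its notation) separately, using the first part at parameter $s+1$ to get $d(C)\subseteq d(A')$ directly, and only builds a replacement set in the case $t=s$, choosing the \emph{least} points so that the result is already compressed and the inclusions hold outright, with no post hoc compression pass needed.
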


\begin{proof}
When $s=1$ the claim is equivalent to the condition $B_{\geq r+1}\subseteq d\left(A\right)$
which is certainly true. Now suppose that $s\geq2$, and let $x$
be the least point under the $\leq$-order in $D$ with $m\left(x\right)=s$.
Since $A$ is a down-set, the minimality of $x$ implies that there
exists a unique $i$ satisfying $x_{i}=s$.

Let $v\in d\left(\left[s\right]^{n}\cap B_{r}\right)$. Since $\left|R_{0}\left(v\right)\right|=r+1\ge2$,
it follows that there exists $j\neq i$ with $v_{j}=0$. If $x_{j}\neq0$,
consider the point $z$ obtained by setting

\[
z_{t}=\begin{cases}
\begin{array}{cc}
v_{t} & \text{if }t\neq j\\
x_{j} & \text{if }t=j
\end{array}.\end{cases}
\]
Otherwise, pick any $p\neq j$ with $v_{p}=0$, and consider $z$
obtained by setting 

\[
z_{t}=\begin{cases}
\begin{array}{cc}
v_{t} & \text{if }t\neq p\\
1 & \text{if }t=p
\end{array}.\end{cases}
\]

Since $v_{t}\in\left[s\right]$ for all $t$ and $j$ is chosen so
that $x_{j}\neq s$, it follows that in either case we have $z\in\left[s\right]^{n}$.
In both cases we certainly have $v\in d\left(z\right)$ by the construction
of $z$. Since $v\in d\left(\left[s\right]^{n}\cap B_{r}\right)=\left[s\right]^{n}\cap B_{r+1}$,
it follows that $\left|R_{0}\left(x\right)\right|=\left|R_{0}\left(z\right)\right|$.
Since $m\left(x\right)>m\left(z\right)$, we have $z\leq x$. 

In the first case we have $x_{j}=z_{j}$, and hence (\ref{eq:10-1})
implies that $z\in A$. In the second case we have $z_{j}=v_{j}=x_{j}=0$,
so again (\ref{eq:10-1}) implies that $z\in A$. Hence it follows
that $v\in d\left(A\right)$, which completes the proof of the first
part. 

For the second part, suppose that $A'$ is a compressed set satisfying
$\left|B_{\geq r+1}\cup\left(\left[s\right]^{n}\cap B_{r}\right)\right|<\left|A'\right|\leq\left|B_{\geq r+1}\cup\left(\left[s+1\right]^{n}\cap B_{r}\right)\right|$
for some $r$ and $s$. Recall that by Claim 5 we may assume that
$B_{\geq r+1}\subseteq A'\subseteq B_{\geq r}$. Let $C$ be the initial
segment of the $\leq$-order with $\left|C\right|=\left|A'\right|$, and note
that $C\subseteq B_{\geq r+1}\cup\left(\left[s+1\right]^{n}\cap B_{r}\right)$. 

Let $t=\max\left(m\left(x\right):x\in A'\cap B_{r}\right)$. Since
$\left|A'\right|>\left|B_{\geq r+1}\cup\left(\left[s\right]^{n}\cap B_{r}\right)\right|$,
we must have $t\geq s$ (recall that $\left[s\right]^{n}=\left\{ 0,\dots,s-1\right\} ^{n}$).
If $t\geq s+1$, the first part implies that $d\left(B_{\geq r+1}\cup\left(\left[s+1\right]^{n}\cap B_{r}\right)\right)\subseteq d\left(A'\right)$.
In particular, it follows that $d\left(C\right)\subseteq d\left(A'\right)$
and thus $\left|d\left(A'\right)\right|\geq\left|d\left(C\right)\right|$,
as required. 

If $t=s$, it follows that $A'\subseteq B_{\geq r+1}\cup\left(\left[s+1\right]^{n}\cap B_{r}\right)$.
Let $X=\left\{ x\in A\cap B_{r}:m\left(x\right)=s\right\} $, and
note that $X\neq\emptyset$. Let $Y$ be the set of $k$ least points
of $X$ under the $\leq$-order, where $k=\left|A'\right|-\left|B_{\geq r+1}\cup\left(\left[s\right]^{n}\cap B_{r}\right)\right|$,
and consider $A$ defined by $A=B_{\geq r+1}\cup\left(\left[s\right]^{n}\cap B_{r}\right)\cup Y$.
It is clear that $A$ is compressed, and we have $\left|A\right|=\left|A'\right|=\left|C\right|$.

By the construction of $A$ it follows that $B_{\geq r+1}\cup\left(\left[s\right]^{n}\cap B_{r}\right)\subseteq A\subseteq B_{\geq r+1}\cup\left(\left[s+1\right]^{n}\cap B_{r}\right)$.
By Claim 5 and the first part, we have that $d\left(B_{\geq r+1}\cup\left(\left[s\right]^{n}\cap B_{r}\right)\right)\subseteq d\left(A'\right)$.
In particular, it follows that $d\left(B_{\geq r+1}\cup\left(\left[s\right]^{n}\cap B_{r}\right)\right)\cup d\left(X\right)\subseteq d\left(A'\right)$.
Since $d\left(Y\right)\subseteq d\left(X\right)$, we must have $d\left(A\right)=d\left(B_{\geq r+1}\cup\left(\left[s\right]^{n}\cap B_{r}\right)\right)\cup d\left(Y\right)\subseteq d\left(A'\right)$.
Hence it follows that $\left|d\left(A'\right)\right|\geq\left|d\left(A\right)\right|$,
which completes the proof. 
\end{proof}
Let $s=\max\left(m\left(x\right):x\in D\right)$. From now on we suppose
that $A$ is a compressed set that satisfies $B_{\geq r+1}\cup\left(\left[s\right]^{n}\cap B_{r}\right)\subset A\subseteq B_{\geq r+1}\cup\left(\left[s+1\right]^{n}\cap B_{r}\right)$.

We split our proof into two subcases based on whether $s=\max\left(m\left(x\right):x\in D\right)$
satisfies $s=1$ or $s\geq2$. When $s=1$, Theorem 1 is actually
equivalent to the Kruskal-Katona Theorem, and hence the case $s\geq2$
is the main part of the proof. 

\subsubsection*{Case 1.1. $s=1.$}

Since $s=1$, it follows that $D\subseteq\left\{ 0,1\right\} ^{n}\cap B_{r}$.
However, $\left\{ 0,1\right\} ^{n}\cap B_{r}$ is just the set of
those points in $\left\{ 0,1\right\} ^{n}$ with exactly $r$ coordinates
that equal $0$, i.e.\ the set $\left\{ 0,1\right\} _{n-r}^{n}$. It
is easy to check that the colexicographic order and the $\leq$-order
coincide on $\left\{ 0,1\right\} _{n-r}^{n}$. Thus Theorem 1 holds
when $s=1$, as the Kruskal-Katona theorem states that the initial
segments of the colexicographic order minimise the lower shadow on
$\left\{ 0,1\right\} _{n-r}^{n}$.

\subsubsection*{Case 1.2. $s\protect\geq2.$}

Since $s\ge2$, it follows that for any point $x$ changing a value
of a non-zero coordinate to 1 cannot increase the set $R_{s}\left(x\right)$.
As a consequence of Claim 6, from now on we only need to focus on
those points in $D$ that contain  $s$ as a coordinate. 

Let ${\cal A}=\left[n\right]^{\left(\leq n-r\right)}\setminus\left\{ \emptyset\right\} $,
and for $X\in{\cal A}$ denote the class $C_{i}$ corresponding to
$\left(s,X,r\right)$ by $C_{X}$. Note that ${\cal A}$ characterizes
all the classes containing points with $s$ as a maximal coordinate
and $r$ zeroes. As noted before, the order of classes under the $\leq$-order
is the order induced on ${\cal A}$ by the binary order.

Let $T\in{\cal A}$ be the largest set under the binary order with
$C_{T}\cap D\neq\emptyset$. We start by proving that for any $S\in{\cal A}$
with $S<_{bin}T$ we have $d\left(C_{S}\right)\subseteq d\left(D\right)$.
If $S\cap T\neq\emptyset$, the claim follows easily as $C_{S}\subseteq D$
holds by (\ref{eq:10-1}). We start by proving the claim when $T$
contains only one element, as in this case such an easy argument does
not exist and the stronger conclusion $C_{s}\subseteq D$ may not
always hold. In Claim 8 we deduce that for any $S\in{\cal A}$ with
$S<_{bin}T$ we have $d\left(C_{S}\right)\subseteq d\left(D\right)$
regardless of the size of $T$. 
\begin{claim}
Let $X\in{\cal A}$ be a set satisfying $C_{X}\cap D\neq\emptyset$
and $\left|X\right|=1$. Then for any $S\in{\cal A}$ with $S<_{bin}X$
we have $d\left(C_{S}\right)\subseteq d\left(D\right)$. 
\end{claim}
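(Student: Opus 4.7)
The strategy is, for each $v \in d(C_S)$, to exhibit an explicit $z \in D$ with $v \in d(z)$. I would begin by fixing an auxiliary point $w \in D \cap C_X$ (which exists by hypothesis); since $w \in C_{\{i\}}$ we have $w_i = s$, $|R_0(w)| = r$, and $w_j \in \{0, 1, \ldots, s-1\}$ for all $j \neq i$. The conditions $S \in \mathcal{A}$ and $S <_{bin} \{i\}$ together force $S$ to be a nonempty subset of $\{1, \ldots, i-1\}$. Writing $v \in d(y)$ for some $y \in C_S$, the point $v$ is obtained by zeroing a single nonzero coordinate of $y$, so $|R_0(v)| = r+1$ and $R_s(v) \subseteq R_s(y) = S \subseteq \{1, \ldots, i-1\}$.

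The argument splits on whether $R_s(v)$ is empty. In the easier case $R_s(v) = \emptyset$, every coordinate of $v$ lies in $\{0, \ldots, s-1\}$, and since $s \geq 2$ in Case 1.2 I can promote any zero of $v$ to the value $1$ to obtain a point $z \in [s]^n \cap B_r$; the inclusion $[s]^n \cap B_r \subseteq A$ inherited from Claim 6 gives $z \in D$, and $v \in d(z)$ by construction. When $R_s(v) \neq \emptyset$ I again construct $z$ from $v$ by promoting a single zero, chosen so that $z \leq w$ and $z$ agrees with $w$ at some coordinate, invoking (\ref{eq:10-1}) to conclude $z \in A$ and hence $z \in D$. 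The first attempt locates $q \in R_0(v) \setminus R_0(w)$ with $q \neq i$ and sets $z_q = w_q \in \{1, \ldots, s-1\}$ while leaving the other coordinates unchanged: since $|R_0(v) \setminus R_0(w)| \geq 1$, such a $q$ exists unless $R_0(v) = R_0(w) \cup \{i\}$. In this remaining scenario $v_i = 0$ and $R_0(w) \subseteq R_0(v)$; I instead set $z_i = 1$ and use any $j \in R_0(w)$ (nonempty as $r \geq 1$), at which $z_j = v_j = 0 = w_j$, as the matching coordinate. In both sub-cases $R_s(z) = R_s(v) \subseteq \{1, \ldots, i-1\}$, so $R_s(z) \neq R_s(w) = \{i\}$ and $\max(R_s(z) \Delta R_s(w)) = i \in R_s(w)$, giving $z \leq w$; together with the matching coordinate, (\ref{eq:10-1}) then completes the argument.

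The main obstacle is precisely the scenario $R_0(v) = R_0(w) \cup \{i\}$: the naive strategy of copying some $w_q$-value at a suitable zero of $v$ is forced to pick $q = i$, but then $z_i = s$ would enlarge $R_s(z)$ beyond $R_s(w) = \{i\}$ in binary order (so $z > w$, the wrong direction). Overcoming this requires the observation that in exactly this bad case $w$ and $v$ already share all the zeros of $w$, so a modification at position $i$ itself to a smaller value keeps $R_s(z)$ below $\{i\}$ while the shared zeros supply the matching coordinate demanded by (\ref{eq:10-1}).
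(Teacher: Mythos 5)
Your proof is correct, but the route is genuinely different from the paper's. The paper takes $a$ to be the \emph{minimal} point of $C_X$, namely $a = s_i\bigl(\bigl(\left(n-r-1\right)\cdot 1\bigr)\bigl(r\cdot 0\bigr)\bigr)$, so that every non-$i$ coordinate of $a$ lies in $\{0,1\}$. It then picks two distinct zeros $l,m$ of $x$ with $m \neq i$ and lifts $x$ in two ways, $y$ (promote $x_m$ to $1$) and $z$ (promote $x_l$ to $1$); both lie below $a$, and since $a_m \in \{0,1\}$ exactly one of $y_m = 1$ or $z_m = x_m = 0$ must equal $a_m$, so (\ref{eq:10-1}) applies to one of them. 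You instead work with an arbitrary $w \in D \cap C_X$, construct a single lift $z$, and split cases on $R_s(v) = \emptyset$ and on whether $R_0(v) = R_0(w) \cup \{i\}$. Your construction is sound: in each branch $|R_0(z)| = r$, $R_s(z) = R_s(v) \subseteq \{1,\dots,i-1\}$, so the top disagreement with $w$ is at level $s$ and falls the right way, and the matching coordinate is either $q$ (where $z_q = w_q$) or a shared zero. One cosmetic remark: the first branch $R_s(v) = \emptyset$ (using $[s]^n \cap B_r \subseteq A$ from Claim 6) is actually subsumed by your general construction, since the comparison at level $s$ still gives $z < w$ when $R_s(z) = \emptyset$, so the case split is slightly redundant. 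The paper's trick of choosing $a$ minimal buys a two-candidate argument that sidesteps your case analysis entirely; your version is more computational but does not rely on the specific shape of the witness in $C_X \cap D$.
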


\begin{proof}
Let $i$ be chosen so that $X=\left\{ i\right\} $, and define the
particular point  $a=s_{i}\left(\left(\left(n-r-1\right)\cdot1\right)\left(r\cdot0\right)\right)$.
Note that $a$ is the least point in $C_{X}$, and hence (\ref{eq:10-1})
implies that $a\in D$. 

Let $x\in d\left(C_{S}\right)$. Since $r\ge1$, it follows that $\left|R_{0}\left(x\right)\right|=r+1\ge2$.
Hence there exists distinct elements $l$ and $m$ with $x_{l}=x_{m}=0$.
In particular, we may assume that $m\neq i$. 

Let $y$ be the point obtained by taking $y_{j}=x_{j}$ for $j\neq m$
and $y_{m}=1$, and $z$ be the point obtained by taking $z_{j}=x_{j}$
for $j\neq l$ and $z_{l}=1$. Note that we have $R_{s}\left(y\right)\subseteq S$
and $R_{s}\left(z\right)\subseteq S$, and recall that $R_{s}\left(a\right)=X$. 
Since $S<_{bin}X$, these conditions imply that $y\le a$ and $z\leq a$.
Note that by the construction of $y$ and $z$ we have $y_{m}=1$
and $z_{m}=x_{m}=0$. Since $m\neq i$, it follows that $a_{m}\in\left\{ 0,1\right\} $
and hence we either have $a_{m}=y_{m}$ or $a_{m}=z_{m}$. Thus (\ref{eq:10-1})
implies that we have $y\in D$ or $z\in D$, and hence in either case
it follows that $x\in d\left(D\right)$, which completes the proof. 
\end{proof}
\begin{claim}
Let $T\in{\cal A}$ be the largest element under the binary order with
$C_{T}\cap D\neq\emptyset$. Then for any $S\in{\cal A}$ with $S<_{bin}T$
we have $d\left(C_{S}\right)\subseteq d\left(D\right)$. 
\end{claim}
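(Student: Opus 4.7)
The plan is to mimic Claim 7 by using the $\leq$-minimum $a$ of the class $C_T$ as a universal witness inside $D$. For any $y\in C_T\cap D$, one has $a\leq y$ and $a_i=y_i=s$ for every $i\in T$, so (\ref{eq:10-1}) forces $a\in A$ and hence $a\in D$. A short inspection of $\leq$ restricted to $C_T$ yields the structure $R_s(a)=T$, $R_i(a)=\emptyset$ for $2\leq i\leq s-1$, $R_1(a)=P_1$ (the first $n-|T|-r$ positions of $[n]\setminus T$), and $R_0(a)=P_0$ (the last $r$ positions of $[n]\setminus T$).

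Fix $S\in\mathcal{A}$ with $S<_{bin}T$ and $x\in d(C_S)$, and write $Z=R_0(x)$, so $|Z|=r+1$ and $R_s(x)\subseteq S$. When $|T|=1$ the claim is Claim 7, so from now on I assume $|T|\geq 2$. If $S\cap T\neq\emptyset$, then for any $w\in C_S$ and any $y\in C_T\cap D$ the equality of ranks together with $R_s(w)=S<_{bin}T=R_s(y)$ gives $w\leq y$, and any $i\in S\cap T$ supplies $w_i=y_i=s$, so (\ref{eq:10-1}) yields $C_S\subseteq D$. From now on I therefore assume $S\cap T=\emptyset$, and I record the key inequality $\max R_s(x)\leq\max S<\max T$ (which comes from $S<_{bin}T$ and disjointness).

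In this case I would construct the required $y\in D$ by flipping a single zero of $x$ to a positive value and verifying $y\leq a$, so that (\ref{eq:10-1}) against the witness $a$ applies. If $Z\cap P_1\neq\emptyset$, pick $j\in Z\cap P_1$ and set $y_j=1$: then $R_s(y)=R_s(x)\subseteq S<_{bin}T=R_s(a)$ gives $y\leq a$, and $y_j=1=a_j$ supplies the matching coordinate. If $Z\cap P_1=\emptyset$ but $Z\cap P_0\neq\emptyset$, the inequality $|Z|>|P_0|$ forces $Z\cap T\neq\emptyset$; picking $j\in Z\cap T$ and again setting $y_j=1$ keeps $R_s(y)\subseteq S<_{bin}T$, while any $l\in Z\cap P_0$ witnesses $y_l=0=a_l$.

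The main obstacle is the remaining case $Z\subseteq T$, in which no zero of $x$ lies outside $T$ and flipping a zero to $1$ would leave no coordinate matching $a$. Here $|T|\geq|Z|=r+1\geq 2$, so I would instead pick $j\in Z$ with $j\neq\max T$ and set $y_j=s$, giving $y_j=s=a_j$ immediately. The delicate step is verifying $y\leq a$: since $R_s(x)\cap T=\emptyset$ and $j\in T$, one has $R_s(y)\Delta T=R_s(x)\cup(T\setminus\{j\})$; the choice $j\neq\max T$ together with $\max R_s(x)<\max T$ makes the maximum of this symmetric difference equal to $\max T\in T$, so $R_s(y)<_{bin}T$ and hence $y\leq a$. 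A final application of (\ref{eq:10-1}) then places $y$ in $A\cap B_r=D$, and $x\in d(y)\subseteq d(D)$, as required.
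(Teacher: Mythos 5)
Your argument is correct, but it takes a longer, more computational route than the paper's. The paper handles $\left|T\right|\geq2$ by reducing to Claim 7: since $T_{1}=\left\{ \max T\right\} $ satisfies $T_{1}<_{bin}T$ and $T_{1}\cap T\neq\emptyset$, (\ref{eq:10-1}) immediately gives $C_{T_{1}}\subseteq D$, and then the cases $\max S=\max T$ (whence $\max T\in S\cap T$) and $\max S<\max T$ (whence $S<_{bin}T_{1}$) are dispatched by (\ref{eq:10-1}) and by Claim 7 applied to the singleton $T_{1}$, respectively. Using a singleton is precisely what kills off your troublesome third case: since $\left|R_{0}\left(x\right)\right|=r+1\geq2>1=\left|T_{1}\right|$, the zero-set of $x$ can never lie entirely inside $T_{1}$. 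By instead anchoring the argument on the minimum $a$ of $C_{T}$ itself, you are forced to confront the case $R_{0}\left(x\right)\subseteq T$, and your fix there --- raising a zero at some $j\in R_{0}\left(x\right)\setminus\left\{ \max T\right\} $ all the way to $s$ so that $R_{s}\left(y\right)\Delta T=R_{s}\left(x\right)\cup\left(T\setminus\left\{ j\right\} \right)$ still has maximum $\max T\in T$ and hence $y<a$ --- is a correct and genuinely nontrivial observation, but it is exactly the extra work the paper's reduction sidesteps. Your cases with $R_{0}\left(x\right)\cap P_{1}\neq\emptyset$ or $R_{0}\left(x\right)\cap P_{0}\neq\emptyset$ are in effect an inlined re-derivation of the witness construction inside Claim 7, just with $a$ in place of Claim 7's point. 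Both proofs are sound; the paper's is shorter and leans on the earlier lemma, while yours is more self-contained and makes the witness construction fully explicit.
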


\begin{proof}
If $\left|T\right|=1$, the result follows immediately from Claim
7. Hence we may assume that $\left|T\right|>1$, and define $T_{1}=\left\{ \max T\right\} $.
Note that $T_{1}\cap T\neq\emptyset$ and $T_{1}<_{bin}T$. Hence
(\ref{eq:10-1}) implies that $C_{T_{1}}\subseteq D$.

Let $S\in{\cal A}$ for which $S<_{bin}T$. If $\max S=\max T$, then
(\ref{eq:10-1}) implies that $C_{S}\subseteq D$, and in particular
it follows that $d\left(C_{S}\right)\subseteq d\left(D\right)$. If
$\max S<\max T$, it follows that $S<_{bin}T_{1}$. By applying Claim
7 for the set $T_{1}$ which contains only one point, it follows that
$d\left(C_{S}\right)\subseteq d\left(D\right)$, which completes the
proof. 
\end{proof}
Now we are ready to finish the proof of Case 1.2. Let $C$ be the
initial segment of the $\leq$-order with $\left|C\right|=\left|A\right|$.
Recall that $r$ and $s$ are chosen so that $B_{\geq r+1}\cup\left(\left[s\right]^{n}\cap B_{r}\right)\subseteq A\subseteq B_{\geq r+1}\cup\left(\left[s+1\right]^{n}\cap B_{r}\right)$.
Since $C$ is an initial segment, it follows that there exists $T_{1}\in{\cal A}$
and $x\in T_{1}$ so that 
\[
C=B_{\geq r+1}\cup\left(\left[s\right]^{n}\cap B_{r}\right)\cup\left(\bigcup_{S\in{\cal A},\,S<_{bin}T_{1}}C_{S}\right)\cup\left\{ y:y\leq x,y\in T_{1}\right\} .
\]

As $\left|A\right|=\left|C\right|$, it follows that $T_{1}\le_{bin}T$.
If $T_{1}<T$, we have $d\left(C_{S}\right)\subseteq d\left(A\right)$
for all $S$ with $S\in{\cal A}$ and $S\leq_{bin}T_{1}$ by Claim
8. In particular, it follows that $d\left(C\right)\subseteq d\left(A\right)$,
and therefore $\left|d\left(A\right)\right|\geq\left|d\left(C\right)\right|$.
If $T_{1}=T$, then we have $A\cap C_{T}=\left\{ y:y\leq z,y\in C_{T}\right\} $
for some $z\in C_{T}$ with $x\leq z$ as $C$ is an initial segment.
Hence we have $d\left(\left\{ y:y\leq x,y\in T_{1}\right\} \right)\subseteq d\left(A\right)$,
and it follows that $d\left(C\right)\subseteq d\left(A\right)$. This
completes the proof of Case 1.2. 

\subsection*{Case 2. $\left|A\right|>\left|B_{\protect\geq1}\right|.$}

Recall that $A$ is of the form $A=B_{\geq1}\cup D$ for some $D\subseteq B_{0}=\left\{ 1,\dots,k-1\right\} ^{n}$.
Note that Theorem 1 holds when $\left|A\right|=\left|B_{\geq1}\right|+1$,
as $\left|d\left(x\right)\right|=n$ for any $x\in B_{0}$. From now
on we assume that $\left|A\right|>\left|B_{\geq1}\right|+1$. Hence
it follows that $\left|D\right|>1$, and thus $s=\max\left(m\left(x\right):x\in D\right)$
satisfies $s\geq2$.

Ideally, we would like to use an approach that is similar to the one used in Case 1. However, it turns out that
the statements of Claims 6 and 8 need to be slightly modified. 
We start with a preliminary result which states that (\ref{eq:10-1})
also holds for points inside $d\left(D\right)$. Then we move on to
prove an appropriate version of Claim 6. When $T\neq\left\{ 1\right\} $
it follows that we again have $d\left(\left\{ 1,\dots,s-1\right\} ^{n}\right)\subseteq d\left(A\right)$,
but when $T=\left\{ 1\right\} $ one specific point might be missing
from the shadow. 
\begin{claim}
Let $y\in d\left(D\right)$, and let $x$ be a point satisfying $\left|R_{0}\left(x\right)\right|=1$
and $x\leq y$. Furthermore, suppose that there exists $i$ for which
$x_{i}=y_{i}$. Then we have $x\in d\left(D\right)$. 
\end{claim}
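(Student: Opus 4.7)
The plan is to mimic the proof of the structural identity \eqref{eq:10-1}, adapted from $A$ to $d(A)$. Since $A = B_{\geq 1} \cup D$ we have $d(A) = B_{\geq 2} \cup d(D)$, and because $d(D) \subseteq B_1$ is disjoint from $B_{\geq 2}$, it follows that $d(A) \cap B_1 = d(D)$. The hypothesis $\left|R_{0}\left(x\right)\right| = 1$ places $x$ in $B_{1}$, so it is enough to prove the more symmetric statement $x \in d(A)$. Setting $t = x_{i} = y_{i}$ and writing $x = t_{i}x'$, $y = t_{i}y'$ with $x', y' \in \left[k\right]^{n-1}$, the fact that inserting the same value at a fixed position preserves the $\leq$-order gives $x' \leq y'$. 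The plan is to describe the slice $S := \left\{z' \in \left[k\right]^{n-1} : t_{i}z' \in d(A)\right\}$ and show it is an initial segment of $\leq$ in $\left[k\right]^{n-1}$; since $y' \in S$ and $x' \leq y'$, this will give $x' \in S$, hence $x \in d(A)$.

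The core of the argument is the computation of $S$. For $t \geq 1$, any parent $u \in A$ with $t_{i}z' \in d(u)$ must satisfy $u_{i} = t$ (the flipped coordinate cannot be $i$, as $(t_{i}z')_{i}$ is nonzero); writing $u = t_{i}u'$ gives $u' \in A_{i,t}$ and $z' \in d(u')$, so $S = d(A_{i,t})$. For $t = 0$ two contributions arise: parents $u$ whose flipped coordinate is not $i$ contribute $d(A_{i,0})$, while parents with $u_{i} \neq 0$, in which case $u = (u_{i})_{i}z'$, contribute $\bigcup_{t' \geq 1} A_{i,t'}$. By Claim 4, $A$ is a down-set, so $A_{i,t'} \subseteq A_{i,1}$ for every $t' \geq 1$, collapsing this union to $A_{i,1}$. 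Hence $S = d(A_{i,0}) \cup A_{i,1}$.

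Since $A$ is compressed, each $A_{i,t}$ is an initial segment of $\leq$ in $\left[k\right]^{n-1}$, and by the observation used in the proof of Claim 1 that the $d$-shadow of an initial segment is again an initial segment, $d(A_{i,t})$ is also an initial segment. For $t \geq 1$, $S = d(A_{i,t})$ is therefore an initial segment directly; for $t = 0$, $S = d(A_{i,0}) \cup A_{i,1}$ is a union of two initial segments of the total order $\leq$ and hence a nested union, so $S$ is again an initial segment. Combined with $x' \leq y'$ and $y' \in S$, this gives $x' \in S$, so $x \in d(A)$, and therefore $x \in d(A) \cap B_{1} = d(D)$. The main obstacle is the $t = 0$ case of the slice: unlike the situation for $A$ itself (where $A_{i,0}$ is simply an initial segment by compression), $S$ picks up the extra piece $A_{i,1}$ coming from parents whose $i$-th coordinate is nonzero, and one has to verify that this does not destroy the initial-segment structure. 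The remaining steps are essentially bookkeeping once the slice has been identified.
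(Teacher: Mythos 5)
Your proof is correct, and it takes a genuinely different route from the paper's. The paper argues directly: from $y\in d(D)$ it picks a parent $v\in D$ with $v_b=1$ at the unique zero coordinate $b$ of $y$ (allowed since $A$ is a down-set), defines $u$ from $x$ by replacing its unique zero with $1$, checks that $t=1$ forces $u=v$ and $t\ge 2$ gives $u<v$, and transfers the coincidence $x_i=y_i$ to a coincidence between $u$ and $v$ at some coordinate so that (\ref{eq:10-1}) yields $u\in D$ and hence $x\in d(u)\subseteq d(D)$. You instead upgrade the claim to a statement about $d(A)$ and compute the slice $S=\{z'\in[k]^{n-1}:t_iz'\in d(A)\}$: I checked that your identification $S=d(A_{i,t})$ for $t\ge 1$ and $S=d(A_{i,0})\cup A_{i,1}$ for $t=0$ is right (the down-set property correctly collapses $\bigcup_{t'\ge 1}A_{i,t'}$ to $A_{i,1}$), and the identity $d(A)\cap B_1=d(D)$ lets you translate back at the end. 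The structural reduction is clean and makes clear that Claim 9 is just the $B_1$-layer of a statement that $d(A)$ is ``compressed in the direction $i$.''

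The one thing worth flagging is what your argument leans on: you need that $d(A_{i,0})$ and $d(A_{i,t})$ are initial segments of $[k]^{n-1}$, i.e.\ that the $d$-shadow of an initial segment is an initial segment. The paper uses exactly this assertion in the proof of Claim 1 (to justify equation (\ref{eq:6})) but never proves it, whereas the paper's own proof of Claim 9 is self-contained and does not invoke it. So your proof is correct modulo the same unproven assertion that the paper's Claim 1 already relies on, but it places more weight on it than the paper's route does; if one wanted a fully self-contained treatment one would want to verify that fact (which, incidentally, is closely related in spirit to Claim 9 itself).
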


\begin{proof}
We may certainly assume that $x<y$. Since $y\in d\left(D\right)$,
there exists $v\in D$ with $y\in d\left(v\right)$. Thus there exists
$b$ satisfying $v_{j}=y_{j}$ for $j\neq b$ and $y_{b}=0$, and
since $A$ is a down-set, we may assume that $v_{b}=1$. Note that
$b$ is the unique index for which $y_{b}=0$ as $\left|R_{0}\left(y\right)\right|=1$. 

Since $\left|R_{0}\left(x\right)\right|=1$, there exists a unique
index $a$ with $x_{a}=0$. Let $u$ be the sequence obtained by taking
$u_{j}=x_{j}$ for $j\neq a$ and $u_{a}=1$. Note that we have $x\in d\left(u\right)$. 

Our aim is to prove that $u\in D$. Let $t$ be the largest index
for which $R_{t}\left(x\right)\neq R_{t}\left(y\right)$, and for
such $t$ we have $\max\left(R_{t}\left(x\right)\Delta R_{t}\left(y\right)\right)\in R_{t}\left(y\right)$.
Observe that we must have $t\geq1$, as for any point $z$ the sets
$R_{t}\left(z\right)$ are disjoint and their union equals $\left\{ 1,\dots,n\right\} $. 

First note that by the construction of the points $u$ and $v$ it
follows that $R_{s}\left(u\right)=R_{s}\left(x\right)$ and $R_{s}\left(v\right)=R_{s}\left(y\right)$
for $s\geq2$. In particular, it follows that $R_{s}\left(u\right)=R_{s}$$\left(v\right)$
for $s>t$. 

We start by proving that $t=1$ implies $u=v$. Indeed, by the previous
observation we have $R_{s}\left(u\right)=R_{s}\left(v\right)$ for
all $s>1$, and we also have $R_{0}\left(u\right)=R_{0}\left(v\right)=\emptyset$.
Since for any point $c$ the sets $R_{s}\left(c\right)$ are disjoint
and their union equals $\left\{ 1,\dots,n\right\} $, we must also
have $R_{1}\left(u\right)=R_{1}\left(v\right)$. In particular, it
follows that $u=v$. 

If $t\geq2$, we have $R_{t}\left(u\right)=R_{t}\left(x\right)$ and
$R_{t}\left(v\right)=R_{t}\left(y\right)$. Hence $\max\left(R_{t}\left(u\right)\Delta R_{t}\left(v\right)\right)\in R_{t}\left(v\right)$,
and thus it follows that $u<v$. Recall that there exists $i$ satisfying
$x_{i}=y_{i}$. If $i\not\in\left\{ a,b\right\} $, it follows that
$u_{i}=x_{i}$ and $v_{i}=y_{i}$ by the construction of the points
$u$ and $v$. In particular, we have $u_{i}=v_{i}$, and thus (\ref{eq:10-1})
implies that $u\in D$. 

Now suppose that $i\in\left\{ a,b\right\} $. Since $x_{a}=0$ and
$y_{b}=0$, we must have $x_{i}=y_{i}=0$. However, recall that both
$R_{0}\left(x\right)$ and $R_{0}\left(y\right)$ contain only one
point. Hence we must have $a=b=i$, and thus by the construction of
$u$ and $v$ it follows that $u_{i}=v_{i}=1$. As before, (\ref{eq:10-1})
implies that $u\in D$, which completes the proof as $x\in d\left(u\right)$. 
\end{proof}
~ \\
\begin{claim}
~
\begin{enumerate}
\item If $T\neq\left\{ 1\right\}$, we have $d\left(\left\{ 1,\dots,s-1\right\} ^{n}\right)\subseteq d\left(A\right)$.
\item If $T=\left\{ 1\right\}$, we have \textbf{$d\left(\left\{ 1,\dots,s-1\right\} ^{n}\right)\setminus\left\{ 0\left(\left(n-1\right)\cdot\left(s-1\right)\right)\right\} \subseteq d\left(A\right)$}.
\end{enumerate}
\end{claim}

\begin{proof}
We start with the case $T\neq\left\{ 1\right\} $, and recall that
$s\geq2$. Let $j=\max\left(T\right)$, which by assumption is at
least 2. Define the particular point $a=s_{j}\left(\left(n-1\right)\cdot1\right)$
which is the least point in $C_{\left\{ j\right\} }$. Let $z$ be
a point in $C_{T}\cap D$. Since $a\leq z$ and $a_{j}=z_{j}$, (\ref{eq:10-1})
implies that $a\in D$. Let $b=s_{j-1}\left(\left(n-1\right)\cdot1\right)$,
and note that $b$ is well-defined as $j>1$. Since $2\left(n-1\right)>n$,
there exists $i$ for which $a_{i}=b_{i}=1$ by the pigeonhole principle.
Thus (\ref{eq:10-1}) implies that $b\in D$. 

Let $x\in d\left(\left\{ 1,\dots,s-1\right\} \right)$, and let $i$
be the unique index satisfying $x_{i}=0$. Let $y$ be obtained by
taking $y_{r}=x_{r}$ for $r\neq i$ and $y_{i}=1$. Since $s\geq2$,
it follows that $y\in\left\{ 1,\dots,s-1\right\} ^{n}$, so we have
$y\leq a$ and $y\leq b$. If $i\neq j$ we have $a_{i}=y_{i}=1$,
and if $i=j$ we have $b_{i}=y_{i}=1$. Since $a,b\in D$, we must
have $y\in D$ in either case by (\ref{eq:10-1}). This completes
the proof of the first part. 

Now suppose that $T=\left\{ 1\right\} $. For convenience define the
points $w=0\left(\left(n-1\right)\cdot\left(s-1\right)\right)$ and
$a=s\left(\left(n-1\right)\cdot1\right)$. Note that $a$ is the least
point in $C_{T}$, and thus we have $a\in D$. For every $2\leq i\leq n$
define the points $b_{i}=1_{i}\left(\left(n-1\right)\cdot\left(s-1\right)\right)$.
Then $\left(b_{i}\right)_{i}=1=a_{i}$ and $b_{i}\leq a$ for all
$i$, so (\ref{eq:10-1}) implies that $b_{i}\in d\left(D\right)$
for all $i$. For $2\leq i\le n$ define $c_{i}=0_{i}\left(\left(n-1\right)\cdot\left(s-1\right)\right)$,
and note that $c_{i}\in d\left(b_{i}\right)$ for all $i$. Hence
it follows that $c_{i}\in d\left(D\right)$ for all $2\leq i\leq n$. 

Let $x$ be a point with $x\in d\left(\left\{ 1,\dots,s-1\right\} ^{n}\right)$
and $x\neq w$. If $x_{i}=0$ for some $i\geq2$, then we have $x_{j}\leq\left(c_{i}\right)_{j}$
for every $j$. Since $d\left(A\right)$ is a down-set by Claim 4,
it follows that $x\in d\left(D\right)$. 

Now suppose that $x$ is a point with $x_{1}=0$ and $x\neq w$. Since
$x\neq w$ it follows that $R_{s-1}\left(x\right)$ is a proper subset
of $\left\{ 2,\dots,n\right\} $. Note that $R_{s-1}\left(c_{2}\right)=\left\{ 1,3,\dots,n\right\} $
is larger than any proper subset of $\left\{ 2,\dots,n\right\} $
under the binary order, and hence we must have $x\leq c_{2}$. Let
$k\in\left\{ 2,\dots,n\right\} \setminus R_{s-1}\left(x\right)$,
and consider $v$ obtained by taking $v_{j}=\left(c_{2}\right)_{j}$
for $j\neq k$ and $v_{k}=x_{k}$. Then we  have $x\leq v\leq c_{2}$,
and hence applying Claim 9 twice implies that $v\in d\left(D\right)$
and $x\in d\left(D\right)$. Thus we have $d\left(\left\{ 1,\dots,s-1\right\} ^{n}\right)\setminus\left\{ w\right\} \subseteq d\left(D\right)$,
which completes the proof. 
\end{proof}
Define the particular points $w=0\left(\left(n-1\right)\cdot\left(s-1\right)\right)$
and $a_{i}=i\left(\left(n-1\right)\cdot\left(s-1\right)\right)$ for
$1\leq i\leq s-1$. It is easy to verify that $w\in d\left(x\right)$
for  $x\in\left[s\right]^{n}\cap B_{0}$ if and only if $x=a_{i}$
for some $i$. In order to deal with the case $T=\left\{ 1\right\} $,
we prove a result that adding suitably many consecutive points to
an initial segment must increase the size of the $d$-shadow. This
is done in Claim 12, but first we need a preliminary result. For $x\in\left[k\right]^{n}$,
define the $x^{+}$ to be the successor of $x$, i.e.\ $x^{+}$ is
the least point under the $\leq$-order for which $x<x^{+}$. 
\begin{claim}
Let $x\in D$, and suppose that we also have $y=x^{+}\in D$. Then
the following claims are true. 
\end{claim}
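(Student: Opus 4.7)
The setup is that $x, y = x^{+} \in D \subseteq B_{0} = \{1,\dots,k-1\}^{n}$, and the enumerated sub-claims (which will feed into Claim 12) are plainly meant to describe the relationship between $d(x)$, $d(y)$, and $d(D)$ when $x$ and its immediate successor both lie in $D$. My plan is to first read off an explicit description of $y = x^{+}$ from the definition of the $\le$-order, then exploit the compressed structure via (\ref{eq:10-1}) and Claim 9 to compare shadows.

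The first step is structural: since $|R_{0}(x)| = |R_{0}(y)| = 0$, the successor relation must come from condition (2) of the order. Let $t$ be the largest index with $R_{t}(x) \neq R_{t}(y)$; then $R_{i}(x) = R_{i}(y)$ for all $i > t$ and the two sets $R_{t}(x), R_{t}(y)$ differ in a binary-minimal way. Because $y$ is the \emph{immediate} successor, the difference is forced to be as small as possible, which in the generic case means $y$ is obtained from $x$ by incrementing one coordinate and correspondingly decreasing another, with no room between them in the order. I would tabulate this explicitly and, in particular, observe that $x$ and $y$ agree on all coordinates outside a small "active window".

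The second step is the shadow comparison. For each $v \in d(y)$, I need to locate $v$ inside $d(D)$, and ideally inside $d(D \setminus \{y\})$, using only the hypothesis $x \in D$ and the compressed/down-set properties of $A$. The strategy is the usual one: given $v \in d(y)$ arising from zeroing coordinate $b$ of $y$, construct a partner $u$ obtained by zeroing some other coordinate $a$ of $x$ (or of an intermediate point), and then verify $u \in D$ by checking that $u \le x$ together with $u_{i} = x_{i}$ at some index $i$, so that (\ref{eq:10-1}) applies. For the transfer of such coincidences \emph{at the shadow level} (when $v$ itself lies in $d(D)$ but we want a particular predecessor in $D$), Claim 9 is the exact tool available, since any $v \in d(B_{0})$ has $|R_{0}(v)| = 1$.

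The anticipated main obstacle is the boundary case where $x$ and $y$ differ at the top coordinate value $s$, so the active window involves the maximum. Here two sub-cases must be separated, matching the dichotomy already visible in Claim 10: when $T \ne \{1\}$ there is always an auxiliary canonical point (like $a$ or $b$ in the proof of Claim 10) available as a coincidence partner, but when $T = \{1\}$ the single "bad" shadow point $w = 0((n-1)\cdot(s-1))$ can only be reached through the explicit witnesses $a_{i} = i((n-1)\cdot(s-1))$. I expect the proof of Claim 11 to isolate this configuration as a separate bullet, using the down-set property of $d(A)$ (Claim 4) together with Claim 9 to push the argument through even when direct application of (\ref{eq:10-1}) at the level of $D$ does not immediately yield the partner. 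Once these coincidence arguments are set up, the individual sub-claims should follow by short case checks on the position of the differing coordinate inside versus outside the active window.
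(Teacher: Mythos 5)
Your proposal misidentifies what Claim 11 actually asserts. The three enumerated sub-claims are purely combinatorial statements about the \emph{coordinates} of $x$ and its successor $y = x^{+}$: (1) there is a unique $i$ with $y_{i} > x_{i}$; (2) any $j$ with $y_{j} < x_{j}$ has $y_{j} = 1$; (3) if $y$ has no coordinate equal to $1$, then every $y_{j}$ exceeds $x_{i}$. None of these mention $d(x)$, $d(y)$, or $d(D)$, and the paper's proof makes no use of shadows, Claim 9, Claim 4, the down-set property, or even the compression hypothesis (\ref{eq:10-1}). The hypothesis $x, y \in D \subseteq B_{0}$ only enters by guaranteeing $|R_{0}(x)| = |R_{0}(y)| = 0$, forcing the successor step to come from condition (2) of the $\le$-order.

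Your first paragraph, where you propose to pin down $y = x^{+}$ explicitly via condition (2) and the binary order on the $R_{t}$ sets, is on the right track and roughly matches the opening of the actual proof. But your second and third paragraphs aim at a completely different target: you set up a shadow comparison, invoke Claim 9 to find predecessors in $D$, and worry about the boundary point $w = 0((n-1)\cdot(s-1))$ and the $T=\{1\}$ dichotomy from Claim 10. All of that belongs to the surrounding machinery (Claims 10, 12, 13) and has no bearing on Claim 11 itself. The actual argument is elementary: for each sub-claim one constructs a witness point strictly between $x$ and $y$ in the $\le$-order (for instance, $z$ with $z_{j} = \min(x_{j}, y_{j})$ for $j \neq i$ and $z_{i} = y_{i}$; or $u$ obtained by replacing the coordinates of $y$ where $y_{j} < x_{j}$ with $1$'s; or $v$ obtained by incrementing a small coordinate of $x$), shows $x < z \leq y$ or $x < u \leq y$ or $x < v < y$ by tracking the sets $R_{t}$, and then uses $y = x^{+}$ to force equality or a contradiction. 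You would need to discard the shadow framework and carry out this direct order-theoretic construction instead.
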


\begin{enumerate}
\item There exists an unique $i$ satisfying $y_{i}>x_{i}$. 
\item $y_{j}<x_{j}$ implies that $y_{j}=1$. 
\item If $y_{j}\neq1$ for all $j$, it follows that $y_{j}>x_{i}$ for
all $j$, where $i$ is the unique index satisfying $y_{i}>x_{i}$. 
\end{enumerate}
\begin{proof}
Let $t$ be the largest index satisfying $R_{t}\left(x\right)\neq R_{t}\left(y\right)$. 
Since $x<y$ we must have $R_{t}\left(x\right)<_{bin}R_{t}\left(y\right)$.
Let $i=\max\left(R_{t}\left(x\right)\Delta R_{t}\left(y\right)\right)$,
and note that $i\in R_{t}\left(y\right)$ implies that $y_{i}>x_{i}$. 

Consider the point $z$ obtained by taking $z_{j}=\min\left(x_{j},y_{j}\right)$
for all $j\neq i$ and $z_{i}=y_{i}$. By the construction of $z$
we have $z_{j}\leq y_{j}$ for all $j$, and hence we have $z\leq y$.
For all $s>t$ we have $R_{s}\left(x\right)=R_{s}\left(y\right)$,
and hence by the construction of $z$ it also follows that $R_{s}\left(x\right)=R_{s}\left(z\right)$
for all $s>t$. Similarly, it is easy to see that $R_{t}\left(x\right)\cap R_{t}\left(y\right)\subseteq R_{t}\left(z\right)$.
Since we also have $i\in R_{t}\left(z\right)$, it follows that $\max\left(R_{t}\left(x\right)\Delta R_{t}\left(z\right)\right)=i\in R_{t}\left(z\right)$,
which implies that $x<z$. Combining these two observations we obtain
that $x<z\leq y$, and since $y=x^{+}$ it follows that $y=z$. Thus
for any $j\ne i$ we have $y_{j}=\min\left(x_{j},y_{j}\right)\leq x_{j}$,
which completes the proof of the first part. 

Let $X=\left\{ j:y_{j}<x_{j}\right\} $, and let $u$ be the sequence
obtained by taking $u_{j}=1$ for all $j\in X$ and $u_{j}=y_{j}$
for $j\not\in X$. Since $y\in D$, it follows that $u_{j}\leq y_{j}$
for all $j$, and hence we have $u\leq y$. Let $t$ and $i$ be defined
as in the previous part. Since $R_{s}\left(x\right)=R_{s}\left(y\right)$
for all $s>t$, it follows that $R_{s}\left(x\right)=R_{s}\left(u\right)$
for all $s>t$. Furthermore, it is easy to see that we have $\left\{ i\right\} \cup\left(R_{t}\left(x\right)\cap R_{t}\left(y\right)\right)\subseteq R_{t}\left(u\right)$.
Hence it follows that $\max\left(R_{t}\left(u\right)\Delta R_{t}\left(x\right)\right)=i$,
and therefore we have $x<u$. Combining these two observations we
obtain that $x<u\leq y$, and since $y=x^{+}$ it follows that $y=u$.
Hence $y_{j}<x_{j}$ implies that $y_{j}=1$, which proves the second
part. 

Suppose that we have $y_{j}\neq1$ for all $j$. Combining the first
and the second part, it follows that there exists unique $i$ so that
$y_{j}=x_{j}$ for all $j\neq i$ and $y_{i}>x_{i}$. Since $y$ is
the successor of $x$, it evidently follows that $y_{i}=x_{i}+1$.
Our aim is to prove that $x_{i}$ is strictly smaller than any other
$x_{j}$. First assume that there exists $k$ so that $x_{k}<x_{i}$,
and consider $v$ obtained by taking $v_{j}=x_{j}$ for $j\neq k$
and $v_{k}=x_{k}+1$. The we certainly have $x<v$. For convenience,
set $a=x_{i}$ and $b=x_{k}$. By the construction of $v$ it follows
that $R_{s}\left(x\right)=R_{s}\left(y\right)=R_{s}\left(v\right)$
for all $s\geq a+2$. However, since $b<a$ it follows that $R_{a+1}\left(y\right)=R_{a+1}\left(x\right)\cup\left\{ i\right\} $,
while $R_{a+1}\left(v\right)=R_{a+1}\left(x\right)$. Hence we also
have $v<y$ which contradicts the fact that $y$ is the successor
of $x$. 

Again let $a=x_{i}$, and note that if $a=1$ the claim follows evidently.
Now suppose that for all $j$ we have $x_{j}\geq a$ but $\left|R_{a}\left(x\right)\right|\geq2$.
Let $v$ be the point obtained by taking $v_{j}=x_{j}$ for all $j\not\in R_{a}\left(x\right)$,
$v_{i}=a+1$ and $v_{j}=1$ for all $j\in R_{a}\left(x\right)\setminus\left\{ i\right\} $.
Note that for all $s\ge a+2$ we have $R_{s}\left(x\right)=R_{s}\left(v\right)=R_{s}\left(y\right)$.
However, we also have $R_{a+1}\left(y\right)=R_{a+1}\left(v\right)=R_{a+1}\left(x\right)\cup\left\{ i\right\} $,
$R_{a}\left(y\right)=R_{a}\left(x\right)\setminus\left\{ i\right\} $
and $R_{a}\left(v\right)=\emptyset$ since $a>1$. Since $R_{a}\left(x\right)$
contains at least two elements, it follows that $x<v<y$, which contradicts
the fact that $y=x^{+}$. 

Hence for all $j\neq i$ we have $x_{j}>x_{i}$. Since $y_{j}\ne1$
for all $j$, the second part implies that $y_{j}\geq x_{j}$ for
all $j$. Hence for all $j\neq i$ we have $y_{j}\geq x_{j}>x_{i}$,
and by the first part we have $y_{i}>x_{i}.$ This completes the proof
of the third part. 
\end{proof}
\begin{claim}
Let $x_{1},\dots,x_{L-1}\in\left\{ 1,\dots,L-1\right\} ^{n}$ be consecutive
points under the $\leq$-order. Let $X$ and $Y$ be initial segments
of the $\leq$-order on $\left[k\right]^{n}$ defined by $X=\left\{ y:y\leq x_{L-1}\right\} $
and $Y=\left\{ y:y\leq x_{1}\right\} $. Then $\left|d\left(X\right)\right|=\left|d\left(Y\right)\right|$
if and only if $x_{i}=\left(i\right)\left(\left(n-1\right)\cdot\left(L-1\right)\right)$
for all $1\leq i\leq L-1$. 
\end{claim}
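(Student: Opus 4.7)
The plan is to first translate the condition $|d(X)| = |d(Y)|$ into the equivalent statement that $d(x_i) \subseteq d(Y)$ for every $i \geq 2$; since each $x_i$ has no zeros, this is the same as asking, for each $i \geq 2$ and each $p \in [n]$, for the existence of some $t \neq 0$ with $(x_i)^{(p,t)} \leq x_1$, where $(x_i)^{(p,t)}$ denotes $x_i$ with its $p$-th coordinate replaced by $t$.

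For the ``if'' direction, I would take the explicit $x_i = (i)((n-1)\cdot(L-1))$ and verify consecutiveness by direct comparison of the $R_s$-sets. Then, for each $i \geq 2$ and each $p$, taking $t = 1$ works: if $p = 1$ then $(x_i)^{(1,1)} = x_1$, and if $p \geq 2$ then $(x_i)^{(p,1)}$ has $R_{L-1} = [n]\setminus\{p\}$, which compared to $R_{L-1}(x_1) = \{2,\dots,n\}$ has symmetric difference $\{1,p\}$ with maximum $p \in R_{L-1}(x_1)$, giving $(x_i)^{(p,1)} \leq x_1$.

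For the ``only if'' direction, I would first rule out every ``Case 2'' transition in the language of Claim 11: if $J_i \neq \emptyset$ and $p \in J_i$, then $x_{i+1,p} = 1$, so for any $t \geq 1$ the point $(x_{i+1})^{(p,t)}$ dominates $x_{i+1}$ coordinate-wise and therefore satisfies $(x_{i+1})^{(p,t)} \geq x_{i+1} > x_1$ in the $\leq$-order, so the shadow element zeroing coordinate $p$ of $x_{i+1}$ cannot lie in $d(Y)$, a contradiction. Hence every transition is ``Case 1'': $J_i = \emptyset$ throughout. By Claim 11(3), each $x_i$ ($i \leq L-2$) has a unique minimum at some position $k_i$, and $x_{i+1}$ increments that coordinate by one; a short check shows $x_{i+1}$ retains a unique minimum at $k_i$ only if $x_{i,j} \geq x_{i,k_i} + 2$ for all $j \neq k_i$, so all $k_i$ coincide with a single index $k$. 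Applied iteratively across the $L-2$ transitions, this, together with the bound $x_{i,j} \leq L-1$, forces $x_{1,k} = 1$ and $x_{1,j} = L-1$ for $j \neq k$, so $x_i$ has value $i$ at coordinate $k$ and $L-1$ elsewhere. To finish, the shadow element $(0,L-1,\dots,L-1) \in d(x_{L-1})$ requires some $(t,L-1,\dots,L-1) \leq x_1$; a direct $R_{L-1}$-comparison shows this fails when $k \neq 1$, pinning down $k = 1$.

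The main obstacle will be organising the ``only if'' part. Each individual reduction (ruling out Case 2 via coordinate monotonicity, propagating unique-minimum through the sequence, and the final $R_{L-1}$-comparison) is short once isolated, but they have to be applied in the correct order to extract the required structure from the hypothesis $|d(X)| = |d(Y)|$.
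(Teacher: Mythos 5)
Your proposal is correct, and it follows essentially the same structural outline as the paper: translate the equality of shadow sizes into a condition forcing $R_1(x_i)=\emptyset$ for $i\geq 2$, feed this into Claim 11(3), and extract the rigid structure of the $x_i$. There are a few places where your route diverges from the paper's, and one spot where your stated conclusion is weaker than what you actually need (and what your argument actually shows).

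The paper first proves a clean local characterization: for an initial segment $Z=\{y:y<x\}$ with $x\in\{1,\dots,L-1\}^n$, one has $|d(Z\cup\{x\})|=|d(Z)|$ if and only if $R_1(x)=\emptyset$, via the $n$-term arithmetic progressions $a_1,\dots,a_{L-1}$ along the zeroed coordinate. You instead unwind the hypothesis directly to ``for each $i\geq 2$ and each $p$ there is $t\geq 1$ with $(x_i)^{(p,t)}\leq x_1$,'' which is equivalent but less packaged. Be careful about what this condition yields: you say you ``rule out every Case 2 transition'' and conclude $J_i=\emptyset$, but to invoke Claim 11(3) you need the stronger fact $R_1(x_{i+1})=\emptyset$ (not merely $x_{i+1,j}\geq x_{i,j}$). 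The distinction matters: $J_i=\emptyset$ is consistent with $x_{i+1}$ having a $1$ in a coordinate where $x_i$ also had a $1$, in which case Claim 11(3) does not apply and its conclusion can genuinely fail. Fortunately your argument proves the stronger fact anyway, since it never uses $p\in J_i$, only $x_{i+1,p}=1$; you should simply state the conclusion as $R_1(x_i)=\emptyset$ for all $i\geq 2$, which is exactly what the paper does. (Your domination argument is fine: for zero-free points coordinatewise domination does imply $\geq$ in the $\leq$-order, or equivalently one can invoke the down-set property of Claim~4.)

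From $R_1(x_i)=\emptyset$ onwards, the paper argues on the \emph{value} of the minimum coordinate $m(i)$: Claim 11(3) makes $m$ strictly increasing, $m(2)\geq 2$ forces $m(L-1)\geq L-1$, hence $x_{L-1}=((n)\cdot(L-1))$, and the remaining $x_i$ are then read off as its consecutive predecessors. You argue on the \emph{position} $k_i$ of the unique minimum: a domination check shows $k_{i+1}=k_i$ whenever $x_{i+1}$ has a unique minimum, so $k_1=\dots=k_{L-2}=k$, and then the boundedness $x_{i,j}\leq L-1$ together with the unique-minimum chain forces $x_{1,k}=1$ and $x_{1,j}=L-1$ for $j\neq k$. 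This is a correct alternative, but it buys you less: unlike the paper, you still have a free index $k$ at the end, which you then fix to $k=1$ by an $R_{L-1}$-comparison showing that $(0,(n-1)\cdot(L-1))\in d(x_{L-1})$ cannot land in $d(Y)$ otherwise. The paper's route avoids this last step entirely because the backtracking from $x_{L-1}=((n)\cdot(L-1))$ determines $x_1$ uniquely, with the minimum necessarily in the first coordinate. Both work; the paper's is shorter and needs no final shadow check.
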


\begin{proof}
Let $x\in\left\{ 1,\dots,L-1\right\} ^{n}$, and consider the initial
segment $Z$ defined by $Z=\left\{ y:y<x\right\} $. We start by proving
that $\left|d\left(Z\cup\left\{ x\right\} \right)\right|=\left|d\left(Z\right)\right|$
if and only if $R_{1}\left(x\right)=\emptyset$. Suppose that there
exists $z\in d\left(Z\cup\left\{ x\right\} \right)\setminus d\left(Z\right)$,
and let $j$ be the unique index for which $z_{j}=0$. Note that such
$j$ is unique as $x\in B_{0}$. Let $a_{1},\dots,a_{L-1}$ be the
points obtained by taking $\left(a_{i}\right)_{k}=z_{k}$ for $k\neq j$,
and $\left(a_{i}\right)_{j}=i$. It is clear that $a_{1}<\dots<a_{L-1}$,
and we have $z\in d\left(u\right)$ for $u\in\left\{ 1,\dots,L-1\right\} ^{n}$
if and only if $u\in\left\{ a_{1},\dots,a_{L-1}\right\} $. In particular,
it follows that $z\in d\left(Z\cup\left\{ x\right\} \right)\setminus d\left(Z\right)$
if and only if $Z\cap\left\{ a_{1},\dots,a_{L-1}\right\} =\emptyset$
and $x\in\left\{ a_{1},\dots,a_{L-1}\right\} $. Since $Z$ is an
initial segment, we must have $x=a_{1}$. Thus $\left|d\left(Z\cup\left\{ x\right\} \right)\right|=\left|d\left(Z\right)\right|$
implies that $R_{1}\left(x\right)=\emptyset$, and if $x=a_{1}$ then
$z\in d\left(Z\cup\left\{ x\right\} \right)\setminus d\left(Z\right)$
which proves the converse as well. 

Let $X$ and $Y$ satisfy the conditions of the claim, and suppose
that we have $\left|d\left(X\right)\right|=\left|d\left(Y\right)\right|$.
Then the previous observation implies that we have $R_{1}\left(x_{i}\right)=\emptyset$
for all $2\leq i\leq L-1$. 

Let $m\left(i\right)$ denote the value of the smallest coordinate
of the point $x_{i}$. Since $R_{1}\left(x_{i}\right)=\emptyset$
for $2\leq i\leq L-1$, the third part of Claim 11 implies that $\left(x_{i+1}\right)_{j}>m\left(i\right)$
for all $j$. In particular, we have $m\left(i+1\right)>m\left(i\right)$,
and thus $m$ is strictly increasing. Since $R_{1}\left(x_{2}\right)=\emptyset$
it follows that $m\left(2\right)\geq2$, and hence we must have $m\left(i\right)\geq i$
for all $i$. In particular, it follows that $m\left(L-1\right)\geq L-1$,
and thus it follows that $x_{L-1}=\left(n\cdot\left(L-1\right)\right)$.
Since $x_{i}$'s are consecutive points, it is easy to verify that
$x_{i}=\left(i\right)\left(\left(n-1\right)\cdot\left(L-1\right)\right)$
for all $1\leq i\leq L-1$. 
\end{proof}
The following claim allows us to restrict to proving Theorem 1 for
compressed sets $A$ satisfying $B_{\geq1}\cup\left(\left[s\right]^{n}\cap B_{0}\right)\subseteq A\subseteq B_{\geq1}\cup\left(\left[s+1\right]^{n}\cap B_{0}\right)$
for some $s$. 
\begin{claim}
Let $A$ be a compressed set with $\left|A\right|\geq\left|B_{\geq1}\right|+2$
and let $s\geq2$ be chosen so that $\left|B_{\geq1}\cup\left(\left[s\right]^{n}\cap B_{0}\right)\right|<\left|A\right|\leq\left|B_{\geq1}\cup\left(\left[s+1\right]^{n}\cap B_{0}\right)\right|$.
Then there exists a compressed set $B$ with $\left|A\right|=\left|B\right|$
satisfying $B_{\geq1}\cup\left(\left[s\right]^{n}\cap B_{0}\right)\subseteq B\subseteq B_{\geq1}\cup\left(\left[s+1\right]^{n}\cap B_{0}\right)$
for which we have $\left|d\left(A\right)\right|\geq\left|d\left(B\right)\right|$. 
\end{claim}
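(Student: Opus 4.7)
The plan is to construct $B$ based on $t := \max\{m(y) : y \in D\}$, where $D = A \cap B_0$. The size hypothesis gives $|D| > (s-1)^n$, so $D \not\subseteq \{1,\dots,s-1\}^n$ and hence $t \geq s$. If $t = s$, set $X := \{y \in D : m(y) = s\}$, let $Y$ be the $|D| - (s-1)^n$ least points of $X$ under $\leq$, and define $B = B_{\geq 1} \cup ([s]^n \cap B_0) \cup Y$. If $t \geq s+1$, take $B$ to be the initial segment of the $\leq$-order of size $|A|$; the size assumption places $B$ inside $B_{\geq 1} \cup ([s+1]^n \cap B_0)$. In both cases $|B| = |A|$, the required containments hold, and $B$ is compressed: immediately in the second case, and in the first case by a direct check using compressedness of $A$ together with order-preservation of the maps $z \mapsto t_i z$.

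To compare shadows I would apply Claim 10 to $A$ with its parameter $s$ set to the current $t$. In the branch $T \neq \{1\}$ of Claim 10 one obtains $d(\{1,\dots,t-1\}^n) \subseteq d(A)$; combined with $B_{\geq 2} = d(B_{\geq 1}) \subseteq d(A)$, and in the $t = s$ case with $d(Y) \subseteq d(X) \subseteq d(D) \subseteq d(A)$, this gives $d(B) \subseteq d(A)$ and the inequality is immediate. In the branch $T = \{1\}$ at most one point $w_t := 0((n-1)(t-1))$ may be missing from $d(A)$; when $t \geq s+2$ its non-zero coordinate $t-1 > s$ forces $w_t \notin d(B) \subseteq d(\{1,\dots,s\}^n)$, and $d(B) \subseteq d(A)$ still holds.

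The main obstacle is the two sub-cases $T = \{1\}$ with $t \in \{s, s+1\}$. For $t = s$ with $T = \{1\}$, I would show that compressedness together with $|D| > (s-1)^n$ rules out the bad situation: each slice $D(a) := \{(y_2,\dots,y_n) : (a,y_2,\dots,y_n) \in D\}$ is an initial segment of $\{1,\dots,s-1\}^{n-1}$ with sizes non-increasing in $a$ by compressedness on coordinate $1$, and combining compressedness on the remaining coordinates with the size bound forces $D(1) = \{1,\dots,s-1\}^{n-1}$; this gives $1 \cdot ((n-1)(s-1)) \in D$, hence $w \in d(A)$, so the exceptional point is not actually missing. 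For $t = s+1$ with $T = \{1\}$, a compensation argument handles it: $D$ contains a point $y$ with $y_1 = s+1$, and for any $j \geq 2$ the point $z$ defined by $z_j = 0$ and $z_i = y_i$ for $i \neq j$ lies in $d(y) \subseteq d(A)$ and has $z_1 = s+1 > s$, so $z \notin d(B) \subseteq d(\{1,\dots,s\}^n)$. This produces at least one element of $d(A) \setminus d(B)$ balancing the at most one element $w_t \in d(B) \setminus d(A)$ supplied by Claim 10, yielding $|d(A)| \geq |d(B)|$.
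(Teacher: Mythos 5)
Your overall strategy (split on $t := \max\{m(y) : y \in D\}$, apply Claim 10, and track the single exceptional point $w_t$) is close to the paper's, and your $t=s$ with $T\neq\{1\}$, $t=s+1$ with $T=\{1\}$, and $t\geq s+2$ branches all go through. But the argument in the subcase $t=s$, $T=\{1\}$ has a genuine gap: it is simply not true that compressedness together with $|D|>(s-1)^n$ forces $D(1)=\{1,\dots,s-1\}^{n-1}$, and hence not true that $w \in d(A)$ always holds. Take $n=3$, $s=4$, $k\geq 5$, and
\[
D \;=\; \bigl(\{1,2,3\}\times(\{1,2,3\}^2\setminus\{(3,3)\})\bigr)\;\cup\;\bigl(\{4\}\times\{1,2\}^2\bigr).
\]
One checks directly that every slice $D_{i,a}$ is an initial segment of $B_0^{2}$, so $A=B_{\geq 1}\cup D$ is compressed; moreover $|D|=3\cdot 8+4=28>27=(s-1)^n$, $t=4=s$, and every point of $D$ with maximum coordinate $4$ has it in position $1$, so $T=\{1\}$. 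Yet $(a,3,3)\notin D$ for every $a\geq 1$, so $w=(0,3,3)\notin d(A)$: the ``bad'' situation does occur. Your slice argument breaks because while each $D(a)$ is indeed an initial segment of $B_0^{n-1}$ with $|D(a)|$ non-increasing, nothing forces $D(1)$ to be \emph{all} of $\{1,\dots,s-1\}^{n-1}$; the missing mass can be made up on the $a=4$ slice, exactly as above.

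The paper does not try to rule out $w\notin d(A)$; instead it compensates. If $w\notin d(A)$ then all of $w_1,\dots,w_{s-1}$ (with $w_i=(i,s-1,\dots,s-1)$) are absent from $A$, so since $|A|=|C|$ and these $s-1$ points all lie in $C$, the set $A$ must contain at least $s-1$ more points of $C_T$ than $C$ does. Claim 12 (which characterizes exactly when appending consecutive points to an initial segment fails to grow the shadow, namely only along the special chain $(i)((n-1)\cdot(L-1))$) then yields a point of $d(A\cap C_T)\setminus d(C)$, because that special chain is not contained in $C_{\{1\}}$. This balances the one missing point $w$ and gives $|d(A)|\geq|d(C)|$. (Incidentally, in the $t=s$, $T=\{1\}$ case your truncated set $B$ coincides with the initial segment $C$ anyway, so the fix is to replace your slice argument with this Claim 12 compensation rather than to change the construction of $B$.) Without that compensation step your proof does not close.
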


\begin{proof}
Let $q=\max\left(m\left(x\right):x\in D\right)$, and note that the
condition on the size of $A$ implies that $q\geq s$. Let $T$ be
the largest element under the binary order so that the class $C_{T}$
has a non-empty intersection with $A$. 

First suppose that $T=\left\{ 1\right\} $, and let $C$ be the initial
segment of the $\leq$-order with $\left|C\right|=\left|A\right|$.
Define the particular points $w=0\left(\left(n-1\right)\cdot\left(q-1\right)\right)$
and $w_{i}=i\left(\left(n-1\right)\cdot\left(q-1\right)\right)$ for
$1\leq i\leq q-1$. Thus Claim 10 implies that $d\left(\left[s\right]^{n}\right)\setminus\left\{ w\right\} \subseteq d\left(A\right)$. 

We start by verifying that $C\cap C_{T}\subseteq A\cap C_{T}$. If
$q>s$, the inclusion is trivial as we have $C\cap C_{T}=\emptyset$.
If $q=s$, since $T$ is the largest element under the binary order
so that $C_{T}\cap A\neq\emptyset$, it follows that $A\setminus\left(B_{\geq1}\cup\left(\left[s\right]^{n}\cap B_{0}\right)\right)=A\cap C_{T}$,
and similarly $C\setminus\left(B_{\geq1}\cup\left(\left[s\right]^{n}\cap B_{0}\right)\right)=C\cap C_{T}$.
Since $C$ is an initial segment with $\left|C\right|=\left|A\right|$,
we must have $\left|A\cap C_{T}\right|\geq\left|C\cap C_{T}\right|$.
As $A\cap C_{T}$ is of the form $\left\{ y\in C_{T}:y\leq x\right\} $
for some $x\in C_{T}$, the inclusion follows. 

First consider the case when we also have $w\in d\left(A\right)$.
Then $d\left(\left[s\right]^{n}\right)\subseteq d\left(A\right)$,
and since $d\left(C\cap C_{T}\right)\subseteq d\left(A\right)$, it
follows that $d\left(C\right)\subseteq d\left(A\right)$. Thus we
may take $B=C$. 

Now suppose that $w\not\in d\left(A\right)$. We first deal with the
easy case $q>s$. Note that we have $d\left(C\right)\subseteq d\left(B_{\geq1}\cup\left(\left[s+1\right]^{n}\cap B_{0}\right)\right)$.
Since $q\geq s+1$, Claim 10 implies that $d\left(\left[s+1\right]^{n}\cap B_{0}\right)\setminus\left\{ w\right\} \subseteq d\left(A\right)$,
and hence it follows that $d\left(C\right)\setminus d\left(A\right)\subseteq\left\{ w\right\} $.
Since $q\geq s+1$, $A$ contains a point $u\in\left(\left[q\right]^{n}\setminus\left[s\right]^{n}\right)\cap B_{0}$.
As $n\geq2$ and $R_{0}\left(u\right)=\emptyset$, $d\left(u\right)$
contains a point whose coordinate is at least $s$. In particular,
it follows that $d\left(u\right)\not\subseteq d\left(C\right)$ and
hence we must have $\left|d\left(A\right)\setminus d\left(C\right)\right|\geq1$.
Combining this with $\left|d\left(C\right)\setminus d\left(A\right)\right|\leq1$,
it follows that $\left|d\left(A\right)\right|\geq\left|d\left(C\right)\right|$,
and hence we may take $B=C$. 

Now suppose that we have $w\not\in d\left(A\right)$ and $q=s$. By
using the observation $C\cap C_{T}\subseteq A\cap C_{T}$ and Claim
10, it follows that $d\left(C\right)\setminus d\left(A\right)=\left\{ w\right\} $.
Since $w\not\in d\left(A\right)$ and $q=s$, it follows that $\left\{ w_{1},\dots,w_{s-1}\right\} \subseteq C\setminus A$,
and in particular we have $\left|\left(C\setminus A\right)\cap\left(B_{\geq1}\cup\left(\left[s\right]^{n}\cap B_{0}\right)\right)\right|\geq s-1$.
Since $A\setminus\left(B_{\geq1}\cup\left(\left[s\right]^{n}\cap B_{0}\right)\right)=A\cap C_{T}$
and $C\setminus\left(B_{\geq1}\cup\left(\left[s\right]^{n}\cap B_{0}\right)\right)=C\cap C_{T}$,
it follows that $\left|A\cap C_{T}\right|\geq\left|C\cap C_{T}\right|+s-1$. 

Let $x$ be the largest point  of $A$ and $y$ be the largest point
of $C$ under the $\leq$-order. Define $X=\left\{ z:z\leq x\right\} $,
and note that $C=\left\{ z:z\leq y\right\} $ as $C$ is an initial
segment. Note that we have $x,y\in C_{T}$, and hence we have $\left|X\right|\geq\left|C\right|+\left(s-1\right)$.
Thus Claim 12 with $L=s$ implies that $\left|d\left(X\right)\right|>\left|d\left(C\right)\right|$,
unless $x=\left(2\right)\left(\left(n-1\right)\cdot\left(s\right)\right)$
and $y=\left(n\cdot s\right)$. However, these points are not in $C_{T}$
as $T=\left\{ 1\right\} $, and thus there exists $u\in d\left(X\right)\setminus d\left(C\right)$.
Thus we have $u\in d\left(\left\{ z:x<z\leq y\right\} \right)\subseteq d\left(C_{T}\cap A\right)\subseteq d\left(A\right)$,
and hence it follows that $\left|d\left(A\right)\setminus d\left(C\right)\right|\geq1$.
Since $\left|d\left(C\right)\setminus d\left(A\right)\right|=1$,
 it follows that $\left|d\left(A\right)\right|\geq\left|d\left(C\right)\right|$,
as required. 

Finally consider the case $T\neq\left\{ 1\right\} $. If $q>s$, Claim
10 implies that $d\left(\left[s+1\right]^{n}\cap B_{0}\right)\subseteq d\left(A\right)$,
and thus we can take $B=C$. If $q=s$, let $X=A\setminus\left(B_{\geq1}\cup\left(\left[s\right]^{n}\cap B_{0}\right)\right)$
and $k=\left|A\right|-\left|B_{\geq1}\cup\left(\left[s\right]^{n}\cap B_{0}\right)\right|$.
Let $Y$ be the set of $k$ least points in $X$ under the $\leq$-order,
and let $B=B_{\geq1}\cup\left(\left[s\right]^{n}\cap B_{0}\right)\cup Y$.
We certainly have $\left|A\right|=\left|B\right|$, and since $q=s$
it follows that $B\subseteq B_{\geq1}\cup\left(\left[s+1\right]^{n}\cap B_{0}\right)$.
Furthermore, as $A$ is compressed it follows that $B$ is compressed
as well. Since $T\neq\left\{ 1\right\} $, Claim 10 implies that $d\left(\left[s\right]^{n}\cap B_{0}\right)\subseteq d\left(A\right)$.
Since $Y\subseteq X$, it follows that $d\left(B\right)\subseteq d\left(A\right)$,
and thus $B$ satisfies the conditions, which completes the proof. 
\end{proof}
From now on we assume that $A$ is a compressed set for which there
exists $s\geq2$ so that $B_{\geq1}\cup\left(\left[s\right]^{n}\cap B_{0}\right)\subseteq A\subseteq B_{\geq1}\cup\left(\left[s+1\right]^{n}\cap B_{0}\right)$.
As before, we set $T$ to be the largest set under the binary order
for which $C_{T}\cap A\neq\emptyset$. Our next aim is to prove an
appropriate version of Claim 8. 
\begin{claim}
~
\begin{enumerate}
\item If $\left|T\right|\neq1$, then for all $S$ with $S<_{bin}T$
we have $C_{S}\subseteq A$. 
\item Let $T=\left\{ j\right\} $ and $U=\left\{ 1,\dots,j-1\right\} $
where $j\neq1$. Then for all $S$ with $S<_{bin}U$ we have $d\left(C_{S}\right)\subseteq d\left(A\right)$.
Define the particular point  $w_{j}=0_{j}\left(\left(\left(j-1\right)\cdot s\right)\left(\left(n-j\right)\cdot\left(s-1\right)\right)\right)$.
Then we also have $d\left(C_{U}\right)\setminus\left\{ w_{j}\right\} \subseteq d\left(A\right)$.
\end{enumerate}
\end{claim}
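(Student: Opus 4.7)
The plan is to prove both parts via carefully chosen applications of (\ref{eq:10-1}), mirroring the approach of Claims 7 and 8 but adapted for Case 2 (where the compressed sets in question have no zeros in $C_S$). Throughout, fix some $y \in A \cap C_T$, which exists by the maximality of $T$.

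For Part 1, with $|T| \geq 2$, I split on whether $S \cap T$ is empty. If $S \cap T \neq \emptyset$, pick $i \in S \cap T$: for any $x \in C_S$ we have $x_i = s = y_i$ and $x \leq y$ (both points lie in $B_0$, and $R_s(x) = S <_{bin} T = R_s(y)$), so (\ref{eq:10-1}) immediately gives $x \in A$. If $S \cap T = \emptyset$, I use $|T| \geq 2$ to pick $t \in T \setminus \{\max T\}$ and set $S' = S \cup \{t\}$; since $\max T \in T \setminus \{t\} \subseteq S' \Delta T$ and $\max S < \max T$ (a consequence of $S <_{bin} T$ with $S \cap T = \emptyset$), we obtain $\max(S' \Delta T) = \max T \in T$, hence $S' <_{bin} T$, while $S' \cap T = \{t\}$ is non-empty. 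The first sub-case applied to $S'$ therefore yields $C_{S'} \subseteq A$. For $x \in C_S$, define $x'$ by $x'_t = s$ and $x'_l = x_l$ for $l \neq t$; then $x' \in C_{S'} \subseteq A$, $x \leq x'$ pointwise (hence also in $\leq$-order), and $x_i = s = x'_i$ for any $i \in S$, so (\ref{eq:10-1}) delivers $x \in A$.

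For Part 2, with $T = \{j\}$ a singleton, the chain argument above collapses and the aim is only the weaker statement $d(C_S) \subseteq d(A)$. My first step is to show that the minimum $y^{(S)}$ of $C_S$ lies in $A$ for every non-empty $S \subseteq [j-1]$: comparing with $y^{(T)} \in A$, we have $y^{(S)} \leq y^{(T)}$ in $\leq$-order, and a shared coordinate is available at any $l \notin S \cup \{j\}$ (both values equal $1$), which exists since $|S| \leq j - 2$ whenever $S \subsetneq U$. Then for each $x \in d(C_S)$ with its unique zero at position $l^*$, I would build $z' \in A$ with $x \in d(z')$ by varying $z'_{l^*} \in \{1, \dots, s\}$. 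The key observation is that $l^* < j$ is achievable (concretely at $l^* = \min \bar S$, using $S \subsetneq [j-1]$), and then the candidate $z'_{l^*} = s$ places $z'$ in $C_{S \cup \{l^*\}}$, a class still $\leq_{bin} U <_{bin} T$. Membership of the resulting $z'$ in $A$ can be secured by combining the compressed-section initial segments with the already-established $y^{(S')}$ points, and Claim 9 then propagates the shadow membership of the top of $d(C_S)$ downward (via pointwise comparison and shared coordinates) to the rest of $d(C_S)$.

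For the second assertion $d(C_U) \setminus \{w_j\} \subseteq d(A)$ the same strategy applies, except when a shadow point has its zero at $l^* = j$: the natural extension $z'_j = s$ lands $z'$ in $C_{U \cup \{j\}} = C_{[j]}$, which is strictly $>_{bin} T$ and thus disjoint from $A$ by the maximality of $T$. The point $w_j$ is precisely the shadow element for which this is the only available construction, so it is legitimately excluded, while every other shadow point of $C_U$ admits a valid $z'$ with $l^* < j$ or with $z'_{l^*} < s$. The main obstacle will be the Part 2 compression-based verification that the top shadow points of $C_S$ (and of $d(C_U) \setminus \{w_j\}$) are actually in $d(A)$, together with confirming that $w_j$ is the unique unavoidable exception.
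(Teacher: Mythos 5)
Your Part 1 argument is correct and is a mild variant of the paper's. Where the paper first notes $C_U \subseteq A$ for $U=\{1,\dots,\max T-1\}$ and then splits on whether $S\subseteq U$ or $\max T\in S$, you split on whether $S\cap T$ is empty and, in the empty case, augment $S$ to $S'=S\cup\{t\}$ with $t\in T\setminus\{\max T\}$. Your verification that $S'<_{bin}T$ and the lift of $x\in C_S$ to $x'\in C_{S'}$ via $x'_t=s$ both hold up (the shared coordinate at any $i\in S$ makes (\ref{eq:10-1}) applicable). The effort is about the same; this is a fine alternative.

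Part 2 is where the real gap lies. Your first step (``$y^{(S)}\in A$ for every non-empty $S\subseteq[j-1]$'') is only justified for $S\subsetneq U$, where the bound $\left|S\right|\le j-2$ guarantees a shared coordinate at some $l\notin S\cup\{j\}$. For $S=U$ your justification does not apply, and for $S=U$ with $j=n$ it genuinely cannot: $S\cup\{j\}=\left[n\right]$ leaves no coordinate at all. This is exactly where the hard work of the claim lives, and the paper devotes a separate, considerably more delicate argument to the $T=\{n\}$ case (introducing $V=\{2,\dots,n-1\}$, the points $u,v,w_{j,k}$, and a dedicated treatment of $n=3$). Your proposal does not engage with this at all.

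The lifting step is also not correct as stated. For $x\in d(C_S)$ with zero at $l^*$, you propose $z'_{l^*}=s$, which places $z'\in C_{S\cup\{l^*\}}$, and then you say its membership in $A$ ``can be secured by combining the compressed-section initial segments with the already-established $y^{(S')}$ points''. But knowing only that $y^{(S')}$ (the minimum of $C_{S'}$) lies in $A$ does not give $z'\in A$, since $z'\ge y^{(S')}$; you would need $C_{S'}\subseteq A$, which is what the paper actually establishes for $S'\subsetneq U$ but which your plan never proves. Moreover the crucial lifts in the paper do not set the new coordinate to $s$ at all: for $i\ne j$ they set $y_i=1$ and compare against the point $a\in C_T$ (not a point of $C_S$), and for $i=j$ they route through the chain $u<v<a$ with $v_j=s-1$. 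The appeal to Claim 9 at the end is likewise only a gesture --- a chain of comparisons with shared coordinates must be constructed, not merely asserted. You yourself flag ``the main obstacle'' as unresolved, and that obstacle is the substance of the claim; as written, this is a plan that breaks precisely at the step that matters.
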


\begin{proof}
We start by proving the first part. Let $j=\max\left(T\right)$ and
set $U=\left\{ 1,\dots,j-1\right\} $. Since $\left|T\right|\neq1$
it follows that $U\cap T\neq\emptyset$, and hence (\ref{eq:10-1})
implies that $C_{U}\subseteq A$. Given any $S\in{\cal A}$ with $S<_{bin}T$,
note that we must either have $S\subseteq U$ or $j\in S$. If $S\subseteq U$,
we also have $S\leq_{bin}U$ and since $C_{U}\subseteq A$, (\ref{eq:10-1})
implies that $C_{S}\subseteq A$ holds as well. If $j\in S$, then
we certainly have $S\cap T\neq\emptyset$. Since $S<_{bin}T$, (\ref{eq:10-1})
implies that $C_{S}\subseteq A$. In particular, for all $S<_{bin}T$
we have $C_{S}\subseteq A$, which completes the proof of the first
part. \\

Now suppose that $T=\left\{ j\right\} $ for some $j$ with $j\not\in\left\{ 1,n\right\} $.
The assumption $j<n$ is used in the proof when we are considering
the $\left(j+1\right)^{th}$ coordinate of a sequence, and such applications
are not highlighted in the proof. Again, let $U=\left\{ 1,\dots,j-1\right\} $.
Define the particular points $a=s_{j}\left(\left(n-1\right)\cdot1\right)$
and $b=\left(\left(j-1\right)\cdot s\right)\left(\left(n-j+1\right)\cdot1\right)$.
Note that $a$ is the least point under the $\leq$-order in $C_{T}$
and $b$ is a point in $C_{U}$. Since $C_{T}\cap A\neq\emptyset$,
(\ref{eq:10-1}) implies that we also have $a\in A$. Since $U<_{bin}T$
and $a_{j+1}=b_{j+1}=1$, (\ref{eq:10-1}) implies that $b\in A$
and thus we have $C_{U}\cap A\neq\emptyset$. Let $S$ be a set satisfying
$S<_{bin}T$ and $S\neq U$. Then we must have $S\subset U$, and
since $C_{U}\cap A\neq\emptyset$, (\ref{eq:10-1}) implies that $C_{S}\subseteq A$.
Thus it only suffices to consider $d\left(C_{U}\right)$.

Let $x\in d\left(C_{U}\right)\setminus\left\{ w_{j}\right\} $ and
let $i$ be the unique index satisfying $x_{i}=0$. If $i\neq j$,
define $y$ by taking $y_{t}=x_{t}$ for all $t\neq i$ and $y_{i}=1$.
Since $j\neq i$, it follows that $y_{i}=a_{i}=1$. We also have $R_{s}\left(y\right)\subseteq U$,
and hence $y\leq a$. Since $a\in A$, (\ref{eq:10-1}) implies that
we must also have $y\in A$, and hence we have $x\in d\left(A\right)$. 

Now suppose that $i=j$. Since $x\in d\left(C_{U}\right)\setminus\left\{ w_{j}\right\} $
and $i=j$, it follows that $x_{t}=s$ for all $t\le j-1$, $x_{j}=0$,
$1\leq x_{t}\leq s-1$ for all $t\geq j+1$, and there exists $k\geq j+1$
satisfying $x_{k}\leq s-2$. Let $u$ be the point obtained by setting
$u_{t}=x_{t}$ for $t\neq j$ and $u_{j}=1$, and let $v$ be obtained
by taking $v_{t}=x_{t}$ for all $t\not\in\left\{ j,k\right\} $,
$v_{j}=s-1$ and $v_{k}=1$. It is easy to see that $x\in d\left(u\right)$.
Since $R_{s}\left(u\right)=R_{s}\left(v\right)$ and $R_{s-1}\left(v\right)=R_{s-1}\left(u\right)\cup\left\{ j\right\} $,
it follows that $u<v$. Since $k\neq i$, it follows that $v_{k}=a_{k}=1$,
and hence (\ref{eq:10-1}) implies that $v\in A$. Since $u_{1}=v_{1}$,
(\ref{eq:10-1}) implies that $u\in A$ and hence it follows that
$x\in d\left(A\right)$, as required. \\

Finally suppose that $T=\left\{ n\right\} $. Let $U=\left\{ 1,\dots,n-1\right\} $
and $V=\left\{ 2,\dots,n-1\right\} $. Note that for any $S<_{bin}T$
we have either $S=U$ or $S\leq_{bin}V$. Define the particular point
$a$ by $a=\left(\left(n-1\right)\cdot1\right)s$. Since $a$ is the
least point in $C_{T}$, it follows that $a\in A$. 

Given $S<_{bin}T$ and a point $x\in d\left(C_{S}\right)$, let
$i$ be the unique index for which $x_{i}=0$. Note that we may certainly
assume that we have $x\not\in d\left(C_{S'}\right)$ for any proper subset
$S'$ of $S$, with the convention $C_{\emptyset}=\left[s\right]^{n}\cap B_{0}$.
It is easy to see that this assumption is equivalent to assuming that $i\not\in S$. 

If $S=U$, the only possible point is $x=\left(\left(n-1\right)\cdot s\right)0$
which is precisely the point $w_{n}$. Now suppose that $S=V$, and
note that we must have $i\in\left\{ 1,n\right\} $. Let $u$ and $v$
be the points $u=\left(1\right)\left(\left(n-2\right)\cdot s\right)\left(s-1\right)$
and $v=\left(s-1\right)\left(\left(n-2\right)\cdot s\right)\left(1\right)$.
Since $a_{1}=u_{1}=1$ and $u<a$, (\ref{eq:10-1}) implies that $u\in A$.
Note that $R_{s}\left(u\right)=R_{s}\left(v\right)$ and $R_{s-1}\left(v\right)<_{bin}R_{s-1}\left(u\right)$,
and hence it follows that $v<u$. Since $n\geq3$, we have $u_{2}=v_{2}=s$,
and hence (\ref{eq:10-1}) implies that $v\in A$. 

For all $1\leq j\leq s-2$ and $1\leq k\leq s-2$ define the points
$w_{j,k}=\left(j\right)\left(\left(n-2\right)\cdot s\right)\left(k\right)$.
Since $R_{s-1}\left(w_{j,k}\right)=\emptyset$, it follows that $w_{j,k}<u$.
As $\left(w_{j,k}\right)_{2}=u_{2}=s$, (\ref{eq:10-1}) implies that
$w_{j,k}\in A$ for all $1\le j,k\leq s-2$.

Since $0\in\left\{ x_{1},x_{n}\right\} $, it follows that either
$x=\left(0\right)\left(\left(n-2\right)\cdot s\right)\left(c\right)$
or $x=\left(d\right)\left(\left(n-2\right)\cdot s\right)\left(0\right)$
for some $1\leq c,d\leq s-1$. If $c=s-1$ or $d=s-1$, we have $x\in d\left(u\right)$
or $x\in d\left(v\right)$ respectively. If $c\leq s-2$ or $d\leq s-2$,
it is easy to see that $x\in d\left(w_{1,c}\right)$ or $x\in d\left(w_{d,1}\right)$
respectively. In particular, it follows that $x\in d\left(A\right)$.
This completes the proof when $S=V$.

In order to deal with the sets $S$ with $S<_{bin}V$, we first assume
that $n\geq4$. As noticed in the previous case, we have $C_{V}\cap A\neq\emptyset$.
If $S\cap V\neq\emptyset$, (\ref{eq:10-1}) implies that we also
have $C_{S}\subseteq A$ and hence the conclusion certainly follows.
The only non-empty set $S$ satisfying $S<_{bin}V$ for which $S\cap V=\emptyset$
is $S=\left\{ 1\right\} $. However, as $n\geq4$ it follows that
$\left\{ 1\right\} <_{bin}\left\{ 1,2\right\} <_{bin}V$, and since
$\left\{ 1\right\} \cap\left\{ 1,2\right\} \neq\emptyset$ and $\left\{ 1,2\right\} \cap V\neq\emptyset$
we also have $C_{\left\{ 1\right\} }\subseteq D$. This completes
the proof when $n\geq4$. 

Now suppose that $n=3$. Then $V=\left\{ 2\right\} $, and thus the
only non-empty set $S$ satisfying $S<_{bin}V$ is $S=\left\{ 1\right\} $.
Let $x\in d\left(C_{\left\{ 1\right\} }\right)$ and let $i$ be the
unique index satisfying $x_{i}=0$. Recall that we may assume that
$i\in\left\{ 2,3\right\} $. Let $y$ be the point obtained by setting
$y_{j}=x_{j}$ for $j\neq i$ and $y_{i}=1$. Our aim is to show that
we have $y\in D$. 

If $i=2$ and $y_{3}=s-1$, we have $y_{3}=u_{3}$. Since $y\leq u$,
(\ref{eq:10-1}) implies that $y\in D$. If $i=2$ and $y_{3}=c$
for some $1\leq c\leq s-2$, then $y_{3}=c=\left(w_{1,c}\right)_{3}$,
and since $y\leq w_{1,c}$ it follows that $y\in D$. Finally, if
$i=3$, we have $y_{3}=v_{3}=1$, and since $y<v$ it follows that
$y\in D$. Thus in all three cases we have $y\in D$, which completes
the proof. 
\end{proof}
Now we have all the necessary tools to finish the proof of Theorem
1. Let $A$ be a set satisfying $B_{\geq1}\cup\left(\left[s\right]^{n}\cap B_{0}\right)\subseteq A\subseteq B_{\geq1}\cup\left(\left[s+1\right]^{n}\cap B_{0}\right)$,
and let $C$ be the initial segment of $\leq$ with $\left|A\right|=\left|C\right|$.
As $A$ and $C$ are of the same size, we must also have $B_{\geq1}\cup\left(\left[s\right]^{n}\cap B_{0}\right)\subseteq C\subseteq B_{\geq1}\cup\left(\left[s+1\right]^{n}\cap B_{0}\right)$.
Let $T_{1}$ be the largest class under the binary order satisfying
$C_{T'}\cap C\neq\emptyset$, and since $C$ is an initial segment
we certainly have $T_{1}\leq_{bin}T$. 

First suppose that $T_{1}<_{bin}T$, and set 
\[
X=B_{\geq1}\cup\left(\left[s\right]^{n}\cap B_{0}\right)\cup\bigcup_{S\leq_{bin}T_{1}}C_{S}.
\]
Note that we cannot have $T=\left\{ 1\right\} $ as $\left\{ 1\right\} $
is the least point under the binary order in ${\cal A}$. 

If $\left|T\right|\geq2$, for any $S\in{\cal A}$ with $S\leq_{bin}T_{1}$
we have $S<_{bin}T$, and hence Claim 14 implies that $C_{S}\subseteq A$.
Hence it follows that $X\subseteq A$, and since $C\subseteq X$ we
must have $C\subseteq A$. Since $A$ and $C$ have the same size,
it follows that $A=C$. However, this contradicts the assumption $T_{1}<_{bin}T$,
so this case cannot occur when $\left|T\right|\geq2$. 

If $T=\left\{ j\right\} $ for some $j\neq1$, define the particular
point $w_{j}=0_{j}\left(\left(\left(j-1\right)\cdot s\right)\left(\left(n-j\right)\cdot\left(s-1\right)\right)\right)$.
Then Claim 14 implies that for all $S<_{bin}\left\{ 1,\dots,j-1\right\} $
we have $d\left(C_{S}\right)\subseteq d\left(A\right)$, and for $U=\left\{ 1,\dots,j-1\right\} $
we have $d\left(C_{U}\right)\setminus\left\{ w_{j}\right\} \subseteq d\left(A\right)$.
Since $T_{1}<_{bin}T$, it follows that $T_{1}\leq_{bin}U$, and hence
$d\left(X\right)\setminus\left\{ w_{j}\right\} \subseteq d\left(A\right)$.
In particular, we have $\left|d\left(X\right)\setminus d\left(A\right)\right|\leq1$. 

Pick any point $v\in C_{T}$, and consider $u\in d\left(v\right)$
obtained by flipping the first coordinate to $0$. Since $j\neq1$,
it follows that $u_{j}=s$. In particular, it follows that $u\not\in d\left(X\right)$
as $T_{1}\leq_{bin}U$. Hence $u\in d\left(A\right)\setminus d\left(X\right)$,
and thus it follows that $\left|d\left(A\right)\setminus d\left(X\right)\right|\geq1$.
Hence we have $\left|d\left(A\right)\right|\geq\left|d\left(X\right)\right|$,
and since $C\subseteq X$ this completes the proof in this case. 

Hence we are only left with the case when $T_{1}=T$. It turns out
to be convenient to split into two subcases based on the size of $T$.
When $T=\left\{ 1\right\} $, there are no non-empty sets $S$ satisfying
$S<_{bin}T$. Hence we can deal with the cases $T=\left\{ 1\right\} $
and $\left|T\right|\geq2$ at the same time. 

\subsubsection*{Case 2.1. $\left|T\right|\protect\geq2$ or $T=\left\{ 1\right\}.$}

Let $C$ be an initial segment with $\left|C\right|=\left|A\right|$,
and let $s$, $T$ and $T_{1}$ be chosen as before. Recall that we
have $T_{1}=T$. We prove that under these assumptions it follows that
$A=C$. 

Let $X$ be defined by $X=B_{\geq1}\cup\left(\left[s\right]^{n}\cap B_{0}\right)\cup\left(\bigcup_{S<_{bin}T}C_{S}\right)$.
Claim 14 implies that for all $S<_{bin}T$ we have $C_{S}\subseteq A$,
and hence it follows that $X\subseteq A$. On the other hand, by the
choice of $T$ we have $A\setminus X=A\cap C_{T}$ and $C\setminus X=C\cap C_{T}$.
Since $\left|A\right|=\left|C\right|$ and both of these sets contain
$X$ as a subset, it follows that $\left|A\cap C_{T}\right|=\left|C\cap C_{T}\right|$.
Since both sets $A\cap C_{T}$ and $C\cap C_{T}$ are of the form
$\left\{ y\in C_{T}:y\le x\right\} $ for some $x\in C_{T}$, it follows
that $A\cap C_{T}=C\cap C_{T}$. Hence we must have $A=C$, as required. 

\subsubsection*{Case 2.2. $T=\left\{ j\right\} $ where $2\protect\leq j\protect\leq n.$}

As before, let $C$ be the initial segment with $\left|C\right|=\left|A\right|$,
let $s$, $T$ and $T_{1}$ be chosen as before, and recall that we
have $T_{1}=T=\left\{ j\right\} $ for some $2\leq j\leq n$. Define
the particular points $w=0_{j}\left(\left(\left(j-1\right)\cdot s\right)\left(\left(n-j\right)\cdot\left(s-1\right)\right)\right)$
and $w_{i}=i_{j}\left(\left(\left(j-1\right)\cdot s\right)\left(\left(n-j\right)\cdot\left(s-1\right)\right)\right)$
for $1\le i\leq s-1$. Note that if $w\in d\left(x\right)$ for some
$x\in C_{S}$ with $S\leq_{bin}T$, then we must have $x=w_{i}$ for
some $i$. As before, let $X=B_{\geq1}\cup\left(\left[s\right]^{n}\cap B_{0}\right)\cup\bigcup_{S<_{bin}T}C_{S}$. 

First consider the easy case when we have $w\in d\left(A\right)$.
Then Claim 10 implies that $d\left(C_{S}\right)\subseteq d\left(A\right)$
for all $S$ with $S<_{bin}T$. Since $X\subseteq C$ and we have
$C\setminus X\subseteq C_{T}$ and $A\setminus X\subseteq C_{T}$,
it follows that $\left|A\cap C_{T}\right|\geq\left|C\cap C_{T}\right|$.
By using the same argument as in Case 2.1, it follows that $C\cap C_{T}\subseteq A\cap C_{T}$
and thus it follows that $d\left(C\right)\subseteq d\left(A\right)$,
as required. 

Now suppose that $w\not\in d\left(A\right)$, and thus it follows
that $\left\{ w_{1},\dots,w_{s-1}\right\} \cap A\neq\emptyset$. Since we have 
$\left\{ w_{1},\dots,w_{s-1}\right\} \subseteq X\subseteq C$, it follows that 
$\left|C\setminus X\right|\geq\left|A\setminus X\right|+s-1$,
and hence we have $\left|A\cap C_{T}\right|\geq\left|C\cap C_{T}\right|+s-1$.
Let $u$ and $v$ be chosen so that $A\cap C_{T}=\left\{ z\in C_{T}:z\leq u\right\} $
and $C\cap C_{T}=\left\{ z\in C_{T}:z\leq v\right\} $. Let $U$ be
the initial segment $U=\left\{ z:z\leq u\right\} $, and note that
$C=\left\{ z:z\leq v\right\} $. Since $\left|C\setminus X\right|\geq\left|A\setminus X\right|+s-1$,
it follows that $\left|U\right|\geq\left|C\right|+s-1$. 

Since $n\geq3$, it follows that $\left(1\right)\left(\left(n-1\right)\cdot s\right)\not\in C_{T}$,
and hence must have $v\neq\left(1\right)\left(\left(n-1\right)\cdot s\right)$.
Since $\left|U\right|\geq\left|C\right|+s-1$, Claim 12 with $L=s+1$
implies that there exists $a\in d\left(U\right)\setminus d\left(C\right)$.
Hence we must have $a\in d\left(A\cap C_{T}\right)$, and thus it
follows that $\left|d\left(A\right)\setminus d\left(C\right)\right|\geq1$.
Together with $\left|d\left(C\right)\setminus d\left(A\right)\right|=1$,
it follows that $\left|d\left(A\right)\right|\geq\left|d\left(C\right)\right|$. 
This completes the proof of Theorem 1. $\hfill\square$

\section{Minimal $d$-shadow for a given rank}

Recall that we defined $\left[k\right]_{r}^{n}=\left\{ x\in\left[k\right]^{n}:w\left(x\right)=r\right\} $
to be the set of those sequences with exactly $r$ non-zero coordinates,
and consider the restriction of the $\leq$-order on $\left[k\right]_{r}^{n}$.
Since $\left|R_{0}\left(x\right)\right|=n-r$ is constant for all
$x\in\left[k\right]_{r}^{n}$, it follows that for distinct $x$ and
$y$ we have $x\le y$ if and only if $\max\left(R_{j}\left(x\right)\Delta R_{j}\left(y\right)\right)\in R_{j}\left(y\right)$,
where $j$ is the largest index for which $R_{j}\left(x\right)\neq R_{j}\left(y\right)$. 

Note that for all $m\leq k$, $\left[m\right]_{r}^{n}$ is an initial
segment of the $\leq$-order in $\left[k\right]_{r}^{n}$, and the
restrictions of the $\leq$-order on $\left[m\right]_{r}^{n}$ and
on $\left[k\right]_{r}^{n}$ coincide on $\left[m\right]_{r}^{n}$.
Thus these orders extend naturally to an order on $\mathbb{N}_{r}^{n}=\left\{ x=x_{1}\dots x_{n}:x_{i}\in\mathbb{N},w\left(x\right)=r\right\} $,
which we will also denote by $\leq$. 

The notion of $d$-shadow is still sensible for subsets of $\mathbb{N}_{r}^{n}$
as well. If $A\subseteq\mathbb{N}_{r}^{n}$, we certainly have $d\left(A\right)\subseteq\mathbb{N}_{r-1}^{n}$.
We  now use Theorem 1 to deduce that among the subsets of $\mathbb{N}_{r}^{n}$
of a given finite size, the initial segment of the $\leq$-order minimises
the size of the $d$-shadow. 
\addtocounter{thm}{-13}
\begin{thm}
Let $A$ be a finite subset of $\mathbb{N}_{r}^{n}$ and let $C$
be the initial segment of the $\leq$-order on $\mathbb{N}_{r}^{n}$
with $\left|A\right|=\left|C\right|$. Then we have $\left|d\left(A\right)\right|\geq\left|d\left(C\right)\right|$. 
\end{thm}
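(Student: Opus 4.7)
The plan is to reduce Theorem 2 to Theorem 1 by a sandwich / padding argument. Since $A$ is finite, pick $k$ large enough that $A$ and also the initial segment $C$ of the $\leq$-order on $\mathbb{N}_r^n$ with $|C|=|A|$ both lie inside $[k]_r^n$; by the observation in the paper that the $\leq$-orders on $[m]_r^n$ and $[k]_r^n$ agree on $[m]_r^n$, this is exactly the initial segment of $[k]_r^n$ of size $|A|$. From now on everything happens in the ambient space $[k]^n$, and I write $B_s = \{x \in [k]^n : |R_0(x)| = s\}$ and $B_{\geq s}=\bigcup_{i\geq s} B_i$ as in the paper.

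The key move is to pad both $A$ and $C$ by the same initial segment. Set
\[
\widetilde{A} = B_{\geq n-r+1}\cup A, \qquad \widetilde{C} = B_{\geq n-r+1}\cup C.
\]
Both unions are disjoint because $A,C\subseteq [k]_r^n = B_{n-r}$. I first need to check that $\widetilde{C}$ is itself an initial segment of the $\leq$-order on $[k]^n$ of size $|\widetilde{A}|$. This follows from the definition of $\leq$: the first rule says that points with strictly more zeros come earlier, so $B_n < B_{n-1} < \dots < B_{n-r}$ as blocks, and hence $B_{\geq n-r+1}$ is already an initial segment; within the next block $B_{n-r}$ the $\leq$-order coincides with the $\leq$-order on $[k]_r^n$, so appending the initial segment $C$ of $[k]_r^n$ of size $|A|$ produces precisely the initial segment of $[k]^n$ of size $|B_{\geq n-r+1}|+|A| = |\widetilde{A}|$.

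Now I compute the $d$-shadows of $\widetilde{A}$ and $\widetilde{C}$. Any point obtained from $x\in B_{\geq n-r+1}$ by flipping a non-zero coordinate to $0$ lies in $B_{\geq n-r+2}$, and conversely every $y\in B_{\geq n-r+2}$ is the image of some such flip (put a $1$ in any coordinate where $y$ is $0$), so $d(B_{\geq n-r+1}) = B_{\geq n-r+2}$. Meanwhile $d(A)\subseteq B_{n-r+1}$ and $d(C)\subseteq B_{n-r+1}$, hence
\[
d(\widetilde{A}) = B_{\geq n-r+2}\cup d(A), \qquad d(\widetilde{C}) = B_{\geq n-r+2}\cup d(C),
\]
and both are disjoint unions. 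Consequently $|d(\widetilde{A})| = |B_{\geq n-r+2}| + |d(A)|$ and $|d(\widetilde{C})| = |B_{\geq n-r+2}| + |d(C)|$. Applying Theorem 1 to $\widetilde{A}$ with initial-segment comparator $\widetilde{C}$ yields $|d(\widetilde{A})| \geq |d(\widetilde{C})|$, which after cancelling $|B_{\geq n-r+2}|$ gives $|d(A)|\geq |d(C)|$, as required.

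There is essentially no obstacle here: the whole force of the argument has already been absorbed into Theorem 1. The only subtleties are the two structural verifications above, namely (i) that the order $\leq$ is defined so that zero-heavy points come first, making $B_{\geq n-r+1}$ an initial segment and pinning down the identity of the initial segment of $[k]^n$ of the appropriate size, and (ii) that the two added pieces $B_{\geq n-r+1}$ and $A$ (respectively $C$) have $d$-shadows lying in disjoint rank strata, so that the padding contributes the same amount on both sides and can be cancelled.
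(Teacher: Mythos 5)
Your proof is correct and is essentially identical to the paper's: both pad $A$ and $C$ by the set of points of rank at most $r-1$ (your $B_{\geq n-r+1}$, the paper's $[nk]_{\leq r-1}^n$), observe that the padded comparator is the initial segment of $[k]^n$ and that the padding contributes $B_{\geq n-r+2}$ to both shadows in a disjoint stratum, and then invoke Theorem 1 and cancel. The only cosmetic difference is that you take $k$ ``large enough'' while the paper fixes a specific bound; the structural verifications you spell out (that $\widetilde{C}$ is an initial segment, and that the shadow of the padding is exactly $B_{\geq n-r+2}$ and disjoint from $d(A),d(C)$) are the same ones the paper relies on implicitly.
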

\begin{proof}
Let $k$ be the size of $A$. Since $A$ contains points of length
$n$ and exactly $r$ coordinates that equal zero, by reordering coordinates
if necessary we may assume that $A\subseteq\left[nk\right]_{r}^{n}$.
Let $B=\left[nk\right]_{\leq r-1}^{n}\cup A$, and recall that we
have $\left[nk\right]_{\le r-1}^{n}=\left\{ x\in\left[nk\right]^{n}:\left|R_{0}\left(x\right)\right|\geq n-r+1\right\} $.
Let $X$ be the initial segment on $\left[nk\right]^{n}$ with $\left|X\right|=\left|B\right|$.
Then $X=\left[nk\right]_{\leq r-1}^{n}\cup C$, where $C$ is the
initial segment of the $\leq$-order on $\left[nk\right]_{r}^{n}$ with $\left|C\right|=\left|A\right|$.
Thus $C$ is also the initial segment of $\leq$ on $\mathbb{N}_{r}^{n}$
with $\left|C\right|=\left|A\right|$. 

Note that we have 
\[
\left|d\left(B\right)\right|=\left|\left[nk\right]_{\leq r-1}^{n}\right|+\left|d\left(A\right)\right|
\]
and 
\[
\left|d\left(X\right)\right|=\left|\left[nk\right]_{\leq r-1}^{n}\right|+\left|d\left(C\right)\right|,
\]
as 
\[
d\left(A\right)\cap\left[nk\right]_{\leq r-1}^{n}=d\left(C\right)\cap\left[nk\right]_{\leq r-1}^{n}=\emptyset.
\]

Since $\left|B\right|=\left|X\right|$ and $X$ is an initial segment
of the $\leq$-order on $\left[nk\right]^{n}$, Theorem 1 implies
that $\left|d\left(B\right)\right|\geq\left|d\left(X\right)\right|$.
Thus we have $\left|d\left(A\right)\right|\geq\left|d\left(C\right)\right|$,
which completes the proof. 
\end{proof}

\end{document}